\begin{document}
\title{Change of numeraire in the two-marginals \\ martingale transport problem\thanks{This work is partially supported by the ANR project ISOTACE (ANR-12-MONU-0013). We would like to thank Robert Dalang for the support. We are also grateful to Stefano De Marco, Pierre Henry-Labord\`ere, David Hobson and Antoine Jacquier for their helpful remarks}}

\author{Luciano Campi\footnote{London School of Economics, Department of Statistics, United Kingdom.}\hspace{0.5cm} Ismail Laachir\footnote{ENSTA ParisTech and Zeliade Systems, France.} \hspace{0.5cm}
Claude Martini\footnote{Zeliade Systems, France.}}

\date{February 29, 2016}
\maketitle

\begin{abstract}
In this paper we apply change of numeraire techniques to the optimal transport approach for computing model-free prices of derivatives in a two periods model. In particular, we consider the optimal transport plan constructed in \cite{HobsonKlimmek2013} as well as the one introduced in  \cite{BeiglJuil} and further studied in \cite{BrenierMartingale}. We show that, in the case of positive martingales, a suitable change of numeraire applied to \cite{HobsonKlimmek2013} exchanges forward start straddles of type I and type II, so that the optimal transport plan in the subhedging problems is the same for both types of options. Moreover, for \cite{BrenierMartingale}'s construction, the right monotone transference plan can be viewed as a mirror coupling of its left counterpart under the change of numeraire. An application to stochastic volatility models is also provided.
\medskip\\
\emph{Keywords and phrases:} robust hedging, model-independent pricing, model uncertainty, optimal transport, change of numeraire, forward start straddle.\\
\emph{JEL Classification:} C61, G11, G13.\\
\emph{MSC Classification 2010:} 91G20, 91G80.
\end{abstract}

\section{Introduction}

Let $\mu$ and $\nu$ be two probability measures on the positive half-line $\R^*_+ := (0,\infty)$, both with unit mean and satisfying $\mu \preccurlyeq \nu$ in the sense of the convex order, i.e. $\int f d\mu \leq \int f d\nu$ for all convex functions $f:\R^*_+ \to \mathbb R$. A classical theorem by \cite{Strassen1965} 
shows the existence of a discrete time martingale $M=(M_t)_{t=0}^2  = (1, X, Y)$ with $X \sim \mu$ and $Y \sim \nu$. Let $\mathcal{M}(\mu, \nu)$ denote the set of all possible laws for such discrete martingales with pre-specified marginals $\mu$, $\nu$. If we interpret the process $M$ as a price of a given stock, any function $C(x,y)$ can be seen as a path-dependent option written on that stock. 

Motivated by the issue of model uncertainty, there has recently been a flourishing of articles on the problem of finding a model-free upper (resp. lower) bound for the price of a given option $C$, which consists in maximizing (resp. minimizing) the expectation $\mathbb E^Q [C(X,Y)]$ with respect to all measures $Q\in \mathcal M(\mu,\nu)$. Indeed, any such measure $Q$ corresponds to some model for the price of the underlying. In the model-free setting such a price is requested to be a martingale (hence free of arbitrage) and to have pre-specified marginals $\mu$ and $\nu$, which can be deduced as usual from the observation of European Call option prices via the Breeden-Litzenberger formula. Therefore, $\mathcal M(\mu,\nu)$ is the set of natural pricing measures in this context. 

The upper bound $\sup_{Q \in \mathcal M(\mu,\nu)} \mathbb E^Q [C(X,Y)]$, for instance, corresponds essentially to the cost of the least expensive semi-static strategy that super-replicates the given payoff. The lower bound has an analogue interpretation as sub-replication price. These optimization problems have been recently tackled using an approach based on optimal transport. In this respect, \cite{BeiglJuil} perform a thorough analysis of martingale transport problems and, among other results, prove that for a certain class of payoffs the optimal probabilities are of special type, called the left-monotone and right-monotone transference plans. Later on, \cite{BrenierMartingale} provide an explicit construction of such optimal transference plans for a more general class of payoffs $C$ that satisfy the so-called \textit{generalized Spence-Mirrlees} condition:
\begin{equation} \label{spenc}
C_{xyy}>0.
\end{equation} 
Finally, \cite{HobsonKlimmek2013} construct another optimal transference plan giving the model-free sub-replication price of a forward start straddle of type II, whose payoff $| X-Y|$ does not satisfy the condition (\ref{spenc}) above.

In this paper we study the effect of change of numeraire on the martingale optimal transport approach to model-free pricing. To our knowledge, change of numeraire has never been used so far in connection to optimal transport and robust pricing. We will focus on the optimal transference plans mentioned above in the case of marginals whose support is $\mathbb R^* _+$, i.e. we will consider \emph{positive} martingales with given marginals. Our main results can be briefly stated as follows: regarding \cite{HobsonKlimmek2013} optimal coupling measure, it turns out that the change of numeraire exchanges forward start straddles of type I and type II with strike $1$, where the payoff of a forward start straddle of type I is given by $|\frac{Y}{X} -1|$.
 As consequence, this yields that the optimal transport plan in the subhedging problems is the same for both types of forward start straddles. This complements, using a different method, the results in \cite{HobsonKlimmek2013} on forward start straddles of type II. On the other hand, regarding \cite{BeiglJuil} and \cite{BrenierMartingale} left and right monotone optimal transport plans, the change of numeraire can be viewed as a mirror coupling for positive martingales. More precisely, we will show that the right monotone transport plan can be obtained with no effort from its left monotone counterpart by suitably changing numeraire. The effect of such a transformation on the generalized Spence-Mirrlees condition is also studied. Other invariance properties by change of numeraire will also be proved along the way. An extended version of the present paper can be found in \cite{LaachirThesis} PhD thesis.\medskip

The paper is structured as follows. We introduce in Section \ref{change} the change of numeraire and prove its main properties. In Section \ref{SecHK} we consider forward start straddles and extend the results in \cite{HobsonKlimmek2013} to forward start straddles of type I. In Section \ref{left}, we give an application of change of numeraire to left and right monotone transference plans for positive martingales. In the last Section \ref{sym}, we study the symmetric case where $\mu$ and $\nu$ are invariant by change of numeraire. This case includes the Black-Scholes model and the stochastic volatility models with no correlation between the spot and the volatility (cf. \cite{rtouzi96}).

\paragraph{Notations:} \begin{itemize}
\item Let $X$ be any random variable defined on some measurable space $(\Omega, \mathcal F)$. We denote by $\mathcal L_{Q} (X)$ the law of $X$ under some measure $Q$. For the expectation of $X$ under $Q$ we indifferently use the notation $\mathbb E^Q [X]$ or $Q[X]$. 
\item We denote by $\mathcal{P}=\mathcal P (\R_+^*)$ the set of all probability measures $\mu$ on $\mathbb R_+^* := (0,\infty)$, equipped with the Borel $\sigma$-field $\mathcal B(\mathbb R_+ ^*)$, and set
\[ \mathcal P_1 = \mathcal P_1 (\mathbb R_+ ^*) := \left\{ \mu \in \mathcal P : \int_{\mathbb R_+ ^*} x\mu (dx) =1\right\}.\]
The subset of all measures $\mu \in \mathcal P_1$ having a positive density, say $p_\mu$, with respect to the Lebesgue measure, is denoted by $\mathcal P_1 ^d$. 
\item If $\mu, \nu \in \mathcal{P}_1$, then $F_\mu, F_\nu$ denote their respective cumulative distribution functions. We also use the notation $\delta F$ for the difference between the two, i.e.
$$\delta F = \delta F_{\mu,\nu} = F_\nu - F_\mu .$$
\item For any function $q(x)$ we use the notation $\overline{q} (x) := 1-q (x) $, and $G_\mu (x) := \int_0^x y \mu(dy)$ for the cumulated expectation of any measure $\mu$. Finally $id$ denotes the identity function. 
\end{itemize}

\section{Change of numeraire}
\label{change}

The technique of change of numeraire was first introduced by \cite{jamshidian1989exact} in the context of interest rate models and turned out to be a
very powerful tool in derivatives pricing (see \cite{geman1995changes}, \cite[Section 2.4]{jeanblanc2009mathematical} and the other references therein for further details). Here we see that such techniques can be fruitfully transposed to a model-free setting.

We consider a two-period financial market with one riskless asset, whose price is identically equal to one, and one risky asset whose discounted price evolution is modelled by the process $(M_t)_{t=0}^2 = (1,X,Y)$. The random variables $X$ and $Y$, modelling respectively the prices at time $t=1$ and $t=2$, are defined on the canonical measurable space $(\Omega, \mathcal F)$, where $\Omega = \Omega_1 \times \Omega_2$ with $\Omega_1 = \Omega_2 = \mathbb R^* _+$ and $\mathcal F = \mathcal B(\Omega)$. For any $\omega =(\omega_1,\omega_2) \in \Omega$, we set $X(\omega)=\omega_1$ and $Y(\omega)=\omega_2$. The final ingredients of our setting are the two marginals laws $\mu$ and $\nu$, which are probability measures on, respectively, $(\Omega_1, \mathcal B(\mathbb R_+))$ and $(\Omega_2, \mathcal B(\mathbb R_+))$, so that $X$ (resp. $Y$) has law $\mu$ (resp. $\nu$). Throughout the whole paper, we will work under the following standing assumption:
\begin{assumption}
The marginals $\mu$ and $\nu$ have unit mean and satisfy $\mu \preccurlyeq \nu$ in the sense of the convex order, i.e. $\int f d\mu \leq \int f d\nu$ for all convex functions $f:\R^*_+ \to \mathbb R$. \end{assumption}

Let $\mathcal{M}(\mu, \nu)$ denote the set of all probability measures on $(\Omega, \mathcal F)$ such that $X \sim \mu$, $Y\sim \nu$, and $M$ is a martingale. As we already claimed in the introduction, by a classical theorem in \cite{Strassen1965}, we know that the previous assumption guarantees that such a set is non-empty.

\subsection{The one-dimensional symmetry operator \texorpdfstring{$S$}{}}

As a preliminary step, we first consider the change of numeraire in a static setting, i.e. for the marginal laws. Thus, we define the (marginal) symmetry operator $S$ as an operator acting on the space of probability measures on $(\mathbb R_+ ^* , \mathcal B(\mathbb R_+ ^*))$ given by
\begin{equation}\label{S} S(\mu) := \mathcal L_{\bar \mu}(1/X), \quad \mu \in \mathcal P(\mathbb R_+ ^*),\end{equation}
where $\bar \mu$ is the probability measure defined by $\bar \mu (A) = \mu(X \mathbf 1_A)$, for any $A \in \mathcal B(\mathbb R^*_+)$. 

\begin{remark} \emph{Financially speaking, $S(\mu)$ is the law of the riskless asset price at time $t=1$ measured in units of the risky one under the new probability $Xd\mathbb P$. This is the usual change of measure associated to a change of numeraire. An analogue interpretation applies to $S(\nu)$.}\end{remark}

Notice that if $\mu \in \mathcal P_1$, i.e. it has unit mean, then $S(\mu) \in \mathcal P_1$ too, since $S(\mu)[X] = \mu [X/X]=1$. In the case where $\mu \in \mathcal{P}^d _1$ with density $p_\mu$, the new measure $S(\mu)$ has a density too and this is given by
\begin{equation}\label{pSmu}
p_{S(\mu)}(x) = \frac{p_\mu(1/x)}{x^3},\quad x>0,
\end{equation}
hence in particular we have $S(\mu) \in \mathcal P^d _1$. Moreover, $S$ is an involution, i.e. $S\circ S =id$. Indeed, we have 
\[ S \circ S (\mu)[f(X)] = S(\mu)[Xf(1/X)] = \mu[(X/X) f(X)] = \mu[f(X)],\]
for all bounded measurable functions $f$. For future reference we summarize our findings in the following lemma, which also contains few more properties, such as the fact that the operator $S$ preserves the convex order.
\begin{lemma}\label{symmetry}
The symmetry operator $S$ defined in (\ref{S}) satisfies the following properties: 
\begin{enumerate}
\item $S$ is an involution preserving the convex order in $\mathcal{P}_1$, i.e. $S \circ S =id$ and if $\mu, \nu \in \mathcal{P}_1$ satisfy $\mu \preccurlyeq \nu$, then $S(\mu)\preccurlyeq S(\nu)$.
\item If $\mu$ has density $p_\mu$, the measure $S(\mu)$ has a density given by $p_{S(\mu)}$ in (\ref{pSmu}).
\item If $\mu \in \mathcal{P}_1$, then for all $y>0$ we have\begin{equation*}
F_{S(\mu)}(y) = 1- G_\mu(1/y) \quad \text{and}\quad G_{S(\mu)}(y) = 1- F_\mu(1/y).
\end{equation*}
\end{enumerate}
\end{lemma}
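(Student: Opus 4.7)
The plan is to address the three items in order, noting that two small pieces (the involution identity and the density formula) have essentially been established in the discussion preceding the lemma; what remains is to verify preservation of convex order and to derive the two explicit formulas in item 3. Throughout I will lean on the basic change-of-numeraire identity
\begin{equation*}
\int f \, dS(\mu) \;=\; \mathbb E^{\bar\mu}[f(1/X)] \;=\; \int_{\mathbb R_+^*} x\, f(1/x)\, \mu(dx),
\end{equation*}
valid for any bounded or nonnegative measurable $f$, which is immediate from the definitions of $S$ and $\bar\mu$.

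For item 1, the involution property $S \circ S = id$ is exactly the two-line computation already displayed before the lemma. For the convex-order part, the key step is the observation that the map sending $f$ to $g$ defined by $g(x) := x f(1/x)$ preserves convexity on $\mathbb R_+^*$ (the classical perspective transform). I would verify this directly: given $x = \lambda x_1 + (1-\lambda) x_2$, write $1/x = \alpha/x_1 + (1-\alpha)/x_2$ with $\alpha := \lambda x_1/x \in [0,1]$, apply convexity of $f$, then multiply through by $x$ to obtain $g(x) \le \lambda g(x_1) + (1-\lambda) g(x_2)$. Given this, $\mu \preccurlyeq \nu$ applied to the convex test function $g$ together with the identity above yields $\int f\, dS(\mu) \le \int f\, dS(\nu)$ for every convex $f$, i.e.\ $S(\mu) \preccurlyeq S(\nu)$.

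Item 2 reduces to a routine change of variable $u = 1/x$ inside $\int f\, dS(\mu) = \int x f(1/x)\, p_\mu(x)\, dx$, which directly identifies $p_\mu(1/u)/u^3$ as the Lebesgue density of $S(\mu)$. Item 3 is equally direct: plugging $f = \mathbf 1_{(0,y]}$ into the change-of-numeraire identity yields
\begin{equation*}
F_{S(\mu)}(y) \;=\; \mu\bigl[X\, \mathbf 1_{\{X \ge 1/y\}}\bigr] \;=\; 1 - G_\mu(1/y),
\end{equation*}
and plugging $f(x) = x\, \mathbf 1_{(0,y]}(x)$ yields
\begin{equation*}
G_{S(\mu)}(y) \;=\; \mu\bigl[\mathbf 1_{\{X \ge 1/y\}}\bigr] \;=\; 1 - F_\mu(1/y),
\end{equation*}
with the customary understanding that a single atom at $x = 1/y$ does not affect the identities. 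The only substantive point in the whole argument is the perspective-convexity fact used for item 1; the rest is unpacking the defining identity for $S$.
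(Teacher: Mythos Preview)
Your proof is correct. The only genuine difference from the paper's argument is in item~1: the paper first reduces the convex-order check to call payoffs $(KX-L)_+$ (using that two measures with equal mass and equal mean are convex-ordered iff all call prices are ordered), and then computes directly
\[
S(\mu)[(KX-L)_+] \;=\; \mu[X(K/X-L)_+] \;=\; \mu[(K-LX)_+],
\]
which is convex in $X$. Your route via the perspective transform $f \mapsto g$, $g(x)=xf(1/x)$, handles all convex test functions at once and avoids the preliminary reduction, at the cost of the short convexity verification you provide. Both approaches rest on the same underlying algebraic identity; the paper simply specializes it to the call family. Your treatment of items~2 and~3 matches the paper's essentially line for line.
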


\begin{proof}
To prove property 1 it suffices to show that $S$ preserves the convex order of measures. Let $\mu, \nu \in \mathcal{P}_1$ such that for any convex function $f$, $\int f d\mu \leq \int f d\nu$. Since $S(\mu)$ and $S(\nu)$ have both unit mass and the same first moment, it is enough to show that for any positive constants $K, L$ we have  
$$ S(\mu)[ (K X -L)_+ ] \leq S(\nu)[ (K X-L)_+ ] .$$
Now $S(\mu)[ (K X -L)_+ ]  = \mu[ X(K/X -L)_+] = \mu[(K-LX)_+]$, and the same holds true for $\nu$. Since $x \mapsto (K-Lx)_+ $ is a convex function, the result follows. Property 2 has already been proved above, so it remains to show property 3. We show only the left-hand side equality, the same arguments can be applied to get the other one. By the definition of $S$ we have
\begin{eqnarray*}
F_{S(\mu)} (y) &=& S(\mu)(X \le y) = \mu[X \mathbf 1_{(1/X \le y)}] \\
&=& \mu[X] - \mu[X \mathbf 1_{(X \le 1/y)}]1= 1-G_\mu (1/y).
\end{eqnarray*}
Hence, the proof is complete.
\end{proof}

\subsection{The symmetric two-marginals martingale problem}

In this subsection, we consider the change of numeraire in the two-period setting. Let $\S$ be the operator that assigns to every $Q \in \mathcal{M}(\mu, \nu)$ the measure $\S(Q)$ defined by
\begin{equation}\label{SS} \mathbb E^{\S(Q)}[f(X, Y)] = \mathbb E^Q\left[Y f\left(\frac{1}{X}, \frac{1}{Y}\right) \right], \; \text{for every bounded measurable function $f$}.\end{equation}

\begin{lemma}\label{PropSymmetry}
The operator $\S$ satisfies the following properties:\begin{enumerate}
\item $\S(Q)$ is a probability in $\mathcal{M}(S(\mu), S(\nu))$ and it satisfies $\S\circ\S=id$, i.e. $\S$ is an involution.
\item $\S\left(\mathcal{M}(\mu, \nu)\right) = \mathcal{M}(S(\mu), S(\nu))$.
\end{enumerate}
\end{lemma}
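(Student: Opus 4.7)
My plan is to verify all the claims about $\S$ by direct computation against the defining identity (\ref{SS}), making crucial use of the martingale property $\mathbb E^Q[Y\mid X]=X$ for $Q\in\mathcal M(\mu,\nu)$.

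For property 1, I would first check that $\S(Q)$ is a probability by plugging $f\equiv 1$ into (\ref{SS}): this gives $\mathbb E^{\S(Q)}[1]=\mathbb E^Q[Y]=1$ since $\nu$ has unit mean, and non-negativity is automatic because $Y>0$ under $Q$. Next I would identify the marginals. Taking $f(x,y)=g(x)$, the martingale property yields
\[
\mathbb E^{\S(Q)}[g(X)]=\mathbb E^Q[Y\,g(1/X)]=\mathbb E^Q[X\,g(1/X)]=\mu[X\,g(1/X)],
\]
which by definition of $S$ equals $S(\mu)[g]$. Taking $f(x,y)=h(y)$ gives $\mathbb E^{\S(Q)}[h(Y)]=\mathbb E^Q[Y\,h(1/Y)]=\nu[Y\,h(1/Y)]=S(\nu)[h]$.

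The main (and only slightly delicate) step is the martingale property under $\S(Q)$. Applying (\ref{SS}) with $f(x,y)=y\,g(x)$ gives
\[
\mathbb E^{\S(Q)}\bigl[Y\,g(X)\bigr]=\mathbb E^Q\!\left[Y\cdot\frac{1}{Y}\,g(1/X)\right]=\mathbb E^Q[g(1/X)],
\]
while $f(x,y)=x\,g(x)$ gives
\[
\mathbb E^{\S(Q)}\bigl[X\,g(X)\bigr]=\mathbb E^Q\!\left[\frac{Y}{X}\,g(1/X)\right]=\mathbb E^Q[g(1/X)],
\]
where the last equality uses $\mathbb E^Q[Y\mid X]=X$. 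Equality of these two quantities for every bounded measurable $g$ is exactly the martingale condition $\mathbb E^{\S(Q)}[Y\mid X]=X$. The involutive property is then a one-line check:
\[
\mathbb E^{\S\circ\S(Q)}[f(X,Y)]=\mathbb E^{\S(Q)}\bigl[Y\,f(1/X,1/Y)\bigr]=\mathbb E^Q\!\left[Y\cdot\frac{1}{Y}\,f(X,Y)\right]=\mathbb E^Q[f(X,Y)].
\]

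For property 2, the inclusion $\S(\mathcal M(\mu,\nu))\subseteq\mathcal M(S(\mu),S(\nu))$ is exactly property 1. Conversely, given $Q'\in\mathcal M(S(\mu),S(\nu))$, apply property 1 with $(\mu,\nu)$ replaced by $(S(\mu),S(\nu))$ to conclude that $\S(Q')\in\mathcal M(S(S(\mu)),S(S(\nu)))=\mathcal M(\mu,\nu)$, using that $S\circ S=id$ from Lemma~\ref{symmetry}. Since $\S\circ\S=id$, we have $\S(\S(Q'))=Q'$, exhibiting $Q'$ as the image of an element of $\mathcal M(\mu,\nu)$. I do not expect any serious obstacle beyond keeping the martingale computation transparent.
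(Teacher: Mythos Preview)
Your proof is correct and follows essentially the same route as the paper's: identify the marginals via the definition of $S$ (using $\mathbb E^Q[Y\mid X]=X$ for the $X$-marginal), establish the martingale property by showing $\mathbb E^{\S(Q)}[Yg(X)]=\mathbb E^Q[g(1/X)]=\mathbb E^{\S(Q)}[Xg(X)]$, and then deduce property~2 from property~1 together with the involutive nature of $S$ and $\S$. Your version is slightly more explicit in places (checking total mass, writing out the involution computation), but the argument is the same.
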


\begin{proof}\begin{enumerate}
\item First, let us prove that $\S(Q) \in \mathcal{M}(S(\mu), S(\nu))$ for $Q\in\mathcal{M}(\mu, \nu)$.
The fact that $Y$ has law $S(\nu)$ under $\S(Q)$ follows from the definition of $S$. Regarding $X$, by the martingale property under $Q$, we have
$$\mathbb E^{\S(Q)}[ f(X)] = \mathbb E^Q\left[Y f\left(\frac{1}{X}\right)\right] =  \mathbb E^Q\left[X f\left(\frac{1}{X}\right)\right],$$
for all bounded measurable functions $f$ depending only on $x$. Hence we conclude since $X$ has law $\mu$ under $Q$.
It remains to show the martingale property:
$$\mathbb E^{\S(Q)}[ Y f(X)] = \mathbb E^Q \left[Y \frac{1}{Y} f\left(\frac{1}{X}\right)\right] = \mathbb E^Q \left[ f\left(\frac{1}{X}\right)\right] = \mathbb E^Q \left[ X \frac{1}{X} f\left(\frac{1}{X}\right)\right].$$
Now by the martingale property under $Q$ we obtain $\mathbb E^Q [ Y \frac{1}{X} f(\frac{1}{X})]= \mathbb E^{\S(Q)}[ X f(X)] $, which implies $\mathbb E^{\S(Q)}[ Y| X] = X$. The fact that $\S$ is an involution follows immediately from its definition.
\item In order to prove that $\S\left(\mathcal{M}(\mu, \nu)\right) = \mathcal{M}(S(\mu), S(\nu))$, we note that one inclusion is implied by the property 1 in this proposition. The other inclusion is a consequence of the fact that the symmetry operator $S$ is an involution.
\end{enumerate}
\end{proof}

\begin{remark}
\emph{Notice that the symmetry operator $S$ can be seen as the projection of $\S$. Indeed, let $Q \in \mathcal M(\mu,\nu)$. For any bounded measurable function $f: \mathbb R_+ ^* \to \mathbb R$, we have $\mathbb E^{\S(Q)}[f(X)] = \mathbb E^Q\left[Y f(1/X) \right] = \mathbb E^Q [Xf(1/X)] = S(\mu)[f(X)]$, where the second equality is due to the martingale property. Hence the projection of $\S(Q)$ into the first coordinate of the product space $\mathbb R_+ ^* \times \mathbb R_+ ^*$ equals $S(\mu)$. Similarly one can see that the projection of $\S(Q)$ onto the second coordinate is $S(\nu)$.}
\end{remark}\medskip

Let $C: (\R_+^*)^2 \to \mathbb R$ be any continuous function with linear growth, i.e. $|C(x,y)| \le \kappa (1+x+y)$ for some constant $\kappa>0$. The lower and upper model-free price bounds for such a derivative can be computed by solving the following martingale optimal transport problems:
\begin{equation} \label{sup}
\underline{P}(\mu, \nu, C) := \inf_{Q \in \mathcal{M}(\mu, \nu)} \mathbb E^Q [C(X, Y)], \quad \overline{P} (\mu, \nu, C) = \sup_{Q \in \mathcal{M}(\mu, \nu)} \mathbb E^Q [C(X, Y)].
\end{equation}
They have the interpretation of sub and super-replication prices of the payoff $C$ through a duality theory that has been developed during the last few years by several authors (see Remark \ref{duality}).

The following proposition shows the symmetry properties of such model-free bounds with respect to the change of numeraire transformation.  

\begin{proposition}\label{PropSymmetry2}
Let us define the payoff $\S^*(C)(x, y) := y C(\frac{1}{x}, \frac{1}{y})$ for $x,y > 0$. Then
\begin{equation}
\underline{P}(S(\mu), S(\nu), \S^*(C)) = \underline{P}(\mu, \nu, C) , \quad \overline{P}(S(\mu), S(\nu), \S^*(C)) = \overline{P}(\mu, \nu, C) .
\end{equation}
\end{proposition}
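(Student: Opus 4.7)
The plan is to reduce the statement to the fact that $\S$ is a bijection (in fact an involution) between $\mathcal{M}(\mu,\nu)$ and $\mathcal{M}(S(\mu),S(\nu))$, as already established in Lemma \ref{PropSymmetry}. The whole work consists of checking that the integrand transforms correctly under $\S$, which is a one-line computation.

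First I would verify the pointwise identity $Y \cdot \S^*(C)(1/X, 1/Y) = C(X,Y)$. Indeed, plugging $(1/x, 1/y)$ into the definition $\S^*(C)(x,y) = y\, C(1/x, 1/y)$ gives $\S^*(C)(1/x, 1/y) = (1/y)\, C(x,y)$, so that multiplication by $y$ returns $C(x,y)$. Then, for any $Q \in \mathcal M(\mu,\nu)$, the defining relation \eqref{SS} applied to the bounded approximations of $f = \S^*(C)$, combined with the linear growth of $C$ and the fact that $X,Y$ have finite first moments under $Q$ (so that the use of dominated convergence is legitimate), yields
\begin{equation*}
\mathbb E^{\S(Q)}[\S^*(C)(X,Y)] = \mathbb E^Q\!\left[Y\,\S^*(C)\!\left(\tfrac{1}{X},\tfrac{1}{Y}\right)\right] = \mathbb E^Q[C(X,Y)].
\end{equation*}

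Next I would take infimum and supremum on both sides. By Lemma \ref{PropSymmetry}, part 2, the image $\S(\mathcal M(\mu,\nu))$ coincides with $\mathcal M(S(\mu),S(\nu))$, and since $\S$ is an involution it is in particular a bijection between these two sets. Therefore
\begin{equation*}
\inf_{Q \in \mathcal M(\mu,\nu)} \mathbb E^Q[C(X,Y)] = \inf_{Q \in \mathcal M(\mu,\nu)} \mathbb E^{\S(Q)}[\S^*(C)(X,Y)] = \inf_{Q' \in \mathcal M(S(\mu),S(\nu))} \mathbb E^{Q'}[\S^*(C)(X,Y)],
\end{equation*}
which is exactly the identity $\underline{P}(\mu,\nu,C) = \underline{P}(S(\mu),S(\nu),\S^*(C))$. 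The same argument with $\sup$ in place of $\inf$ gives the corresponding statement for $\overline{P}$.

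There is no genuine obstacle here; the only point that deserves a moment of care is the integrability justifying the use of \eqref{SS} with the unbounded integrand $\S^*(C)$. This is handled by the linear growth hypothesis on $C$, which implies $|\S^*(C)(x,y)| \le \kappa(y + y/x + 1)$, together with the fact that $\mathbb E^Q[|C(X,Y)|] < \infty$ under $Q$ directly, so the equality can be read off pointwise before taking expectations. All other steps are purely formal consequences of Lemma \ref{PropSymmetry}.
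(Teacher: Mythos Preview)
Your proof is correct and follows essentially the same route as the paper's: both arguments rest on the identity $\mathbb E^{\S(Q)}[\S^*(C)(X,Y)] = \mathbb E^Q[C(X,Y)]$ together with the bijection $\S(\mathcal M(\mu,\nu)) = \mathcal M(S(\mu),S(\nu))$ from Lemma~\ref{PropSymmetry}. The only cosmetic difference is the direction of the chain of equalities (the paper starts from $\overline P(S(\mu),S(\nu),\S^*(C))$ and works down to $\overline P(\mu,\nu,C)$), and you are slightly more explicit about the integrability step needed to extend \eqref{SS} beyond bounded $f$.
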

\begin{proof}
We only prove the equality for $\overline P$, the one for $\underline P$ can be shown using the same arguments. By the definition of $\S^*(C)$ we have 
\[
\overline{P}(S(\mu), S(\nu), \S^*(C)) = \sup_{Q \in \mathcal M(S(\mu),S(\nu))} \mathbb E^Q [\S^*(C)(X,Y)] = \sup_{Q \in \mathcal M(S(\mu),S(\nu))} \mathbb E^Q [Y C(1/X,1/Y)].\]
Using property 2 in Lemma \ref{PropSymmetry} and the definition of $\S^*(Q)$, we get
\begin{eqnarray*}
 \sup_{Q \in \mathcal M(S(\mu),S(\nu))} \mathbb E^Q [Y C(1/X,1/Y)] &=& \sup_{Q \in \S(\mathcal M(\mu,\nu))} \mathbb E^{Q} [YC(1/X,1/Y)] \\
&=& \sup_{Q \in \mathcal M(\mu,\nu)} \mathbb E^{\S(Q)} [Y C(1/X,1/Y)] \\
&=& \sup_{Q \in \mathcal M(\mu,\nu)} \mathbb E^{Q} [C(X,Y)]\\
&=& \overline{P}(\mu, \nu, C),
\end{eqnarray*}
which gives the result.
\end{proof}

We conclude this section by showing how the symmetry operator $\mathbb S^*$ introduced in Proposition \ref{PropSymmetry2} acts on the space of hedgeable claims, which we define as
\begin{eqnarray*}
\mathcal H(\mu,\nu) &=& \big\{C: (\mathbb R_+^*)^2 \to  \mathbb R  :  \text{ there exist } \varphi\in \mathbb{L}^1(\mu), \;\psi\in \mathbb{L}^1(\nu),\; h\in \mathbb{L}^0 ,\\
&& \;\; C(x,y) = \varphi(x) + \psi(y) + h(x)(y-x) \;Q-\text{a.e.} \; \forall Q \in \mathcal{M}(\mu,\nu) \nonumber \big\}.
\end{eqnarray*}
This set contains all the payoffs that can be replicated by investing semi-statically in the stock as well as in Vanilla options. It turns out that this set is invariant by the symmetry operator $\S^*$ or, in other words, the set of semi-static portfolios does not depend on the choice of the numeraire. 
\begin{proposition}
The set $\mathcal H(\mu,\nu)$ is invariant by $\S^*$, i.e. $\S^*(\mathcal H(\mu,\nu)) = \mathcal H(S(\mu),S(\nu)) $.
\end{proposition}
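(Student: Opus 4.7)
The plan is to verify the two inclusions that make up the equality $\S^*(\mathcal H(\mu,\nu)) = \mathcal H(S(\mu), S(\nu))$, exploiting the fact that $\S^*$ is itself an involution. First I would check involutivity: $\S^*(\S^*(C))(x,y) = y\cdot\S^*(C)(1/x,1/y) = y\cdot (1/y)\, C(x,y) = C(x,y)$. Hence once the inclusion $\S^*(\mathcal H(\mu,\nu))\subset \mathcal H(S(\mu),S(\nu))$ is established, the same inclusion applied with $(\mu,\nu)$ replaced by $(S(\mu),S(\nu))$ and composed with $\S^*$ yields the reverse inclusion (using also that $S\circ S = id$ from Lemma \ref{symmetry}). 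So the real content lies in one inclusion only.

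For that, I would start with $C(x,y)=\varphi(x)+\psi(y)+h(x)(y-x)$ and just rewrite $\S^*(C)$ algebraically. Substituting and using $y(1/y-1/x) = -(y-x)/x$, one obtains
\begin{equation*}
\S^*(C)(x,y) = y\varphi(1/x) + y\psi(1/y) - h(1/x)\,(y-x)/x.
\end{equation*}
Splitting $y\varphi(1/x) = x\varphi(1/x) + (y-x)\varphi(1/x)$ lets me read off the candidate decomposition
\begin{equation*}
\tilde\varphi(x) = x\varphi(1/x),\quad \tilde\psi(y) = y\psi(1/y),\quad \tilde h(x) = \varphi(1/x) - h(1/x)/x,
\end{equation*}
so that $\S^*(C)(x,y) = \tilde\varphi(x)+\tilde\psi(y)+\tilde h(x)(y-x)$ holds pointwise wherever the original identity holds at $(1/x,1/y)$.

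Next I would verify the integrability hypotheses in the definition of $\mathcal H(S(\mu),S(\nu))$. Using $S(\mu)[f(X)] = \mu[Xf(1/X)]$, we get $S(\mu)[|\tilde\varphi|] = \mu[X\cdot|\varphi(X)|/X] = \mu[|\varphi|] < \infty$, and the same reasoning gives $\tilde\psi \in \mathbb L^1(S(\nu))$; measurability of $\tilde h$ is immediate. Finally, I would upgrade the pointwise identity to an almost-sure one. Fix $Q'\in \mathcal M(S(\mu),S(\nu))$; by property 2 of Lemma \ref{PropSymmetry} there is $Q\in\mathcal M(\mu,\nu)$ with $Q'=\S(Q)$. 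If $N\subset(\mathbb R_+^*)^2$ is the null set where the original decomposition fails under $Q$, then the null set $N'$ for the new decomposition is contained in $T^{-1}(N)$ with $T(x,y)=(1/x,1/y)$, and the definition (\ref{SS}) gives
\begin{equation*}
\S(Q)(N') \le \mathbb E^{\S(Q)}[\mathbf 1_N(1/X,1/Y)] = \mathbb E^Q[Y\mathbf 1_N(X,Y)] = 0.
\end{equation*}

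The only delicate point I anticipate is bookkeeping the algebraic rearrangement so that the new $\tilde\varphi,\tilde\psi,\tilde h$ match exactly the right spaces, but there is no real obstacle beyond that: the statement is essentially a transport identity for the linear hedging decomposition, and both the integrability and the $Q'$-a.s. transfer follow transparently from the already established properties of $S$ and $\S$.
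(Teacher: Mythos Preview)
Your proposal is correct and follows essentially the same route as the paper: you identify the very same triple $\tilde\varphi(x)=x\varphi(1/x)$, $\tilde\psi(y)=y\psi(1/y)$, $\tilde h(x)=\varphi(1/x)-h(1/x)/x$, and you transfer the $Q$-a.e.\ identity via $\S(\mathcal M(\mu,\nu))=\mathcal M(S(\mu),S(\nu))$, exactly as the paper does with its chain of equivalences. If anything, your write-up is slightly more complete: you make the involutivity of $\S^*$ explicit to reduce to a single inclusion (the paper only writes out one direction), and you spell out the integrability check $S(\mu)[|\tilde\varphi|]=\mu[|\varphi|]$ and the null-set estimate, which the paper leaves to the reader.
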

\begin{proof}
Let $C \in \mathcal H(\mu,\nu)$, i.e. there exist functions $\varphi\in \mathbb{L}^1(\mu), \;\psi\in \mathbb{L}^1(\nu),\; h\in \mathbb{L}^0$ such that
$$C(x,y) = \varphi(x) + \psi(y) + h(x)(y-x) \quad Q-\text{a.e.} \quad \forall Q \in \mathcal{M}(\mu,\nu).$$ 
Let $\S^*(C)(x,y) := y C(1/x,1/y)$ for all $x,y >0$ and let
$$\tilde{\varphi}(x) = x\varphi(1/x), \;
\tilde{\psi}(y) = y \psi(1/y), \;
\tilde{h}(x) = \left(\varphi(1/x)-1/xh(1/x)\right), \quad x,y>0.$$
Such functions verify  $\tilde{\varphi}\in \mathbb{L}^1(S(\mu)), \;\tilde{\psi}\in \mathbb{L}^1(S(\nu)),\; \tilde{h}\in \mathbb{L}^0$.
We can check by direct computation that
\[ \S^*(C)(x,y) = \tilde{\varphi}(x) + \tilde{\psi}(y) + \tilde{h}(x) (y-x), \quad Q - a.e. \; \forall Q \in \mathcal M(S(\mu),S(\nu)). \]
Furthermore, since $\S(\mathcal{M}(\mu,\nu)) = \mathcal{M}(S(\mu),S(\nu))$, 
we have the following equivalences:
\begin{eqnarray*}
&& \mathbb{E}^Q\left[ \left| C(X,Y) -\varphi(X) - \psi(Y) - h(X)(Y-X)\right| \right] = 0, \quad \forall Q \in \mathcal{M}(\mu,\nu) \\
&\Leftrightarrow & \mathbb{E}^{\S(Q)}\left[ \left| \S^*(C)(X,Y) -\tilde\varphi(X) - \tilde\psi(Y) - \tilde{h}(X)(Y-X)\right| \right] = 0,\quad \forall Q \in \mathcal{M}(\mu,\nu) \\
&\Leftrightarrow & \mathbb{E}^Q\left[ \left| \S^*(C)(X,Y)-\tilde\varphi(X) - \tilde\psi(Y) - \tilde{h}(X)(Y-X) \right| \right] = 0, \quad \forall Q \in \mathcal{M}(S(\mu),S(\nu)).
\end{eqnarray*}
Hence $$\S^*(C)(x,y)=\tilde\varphi(x) + \tilde\psi(y) + \tilde{h}(x)(y-x), \;Q-\text{a.e.}, \; \forall Q \in \mathcal{M}(S(\mu),S(\nu)),$$
i.e. $\S^*(C) \in \mathcal H(S(\mu),S(\nu))$.
\end{proof}

\section{Model-free pricing of forward start straddles}
\label{SecHK}

In this section we apply our results on the change of numeraire to compute the model-free sub-replication price of a forward start straddle of type I, which complements the result obtained in \cite{HobsonKlimmek2013}. 

In their article \cite{HobsonKlimmek2013} consider the problem of computing a model-free lower bound on the price of an option paying $|Y-X|$ at maturity. This is an example of \emph{type II forward start straddle}, whose payoff for any strike $\alpha >0$ is given by
\begin{equation}
C^\alpha_{II}(x,y)=\left|y-\alpha x \right|, \quad x,y>0,
\end{equation}
while the \emph{type I forward start straddle} with strike $\alpha >0$ is given by 
\begin{equation}\label{FSS1}
C^\alpha_{I}(x,y)=\left|\frac{y}{x}-\alpha \right|, \quad x,y>0,
\end{equation}
cf. \cite{Lucic2004forward} and \cite{Jacquier2012asymptotics}.
\cite{HobsonKlimmek2013} derive explicit expressions for the coupling minimizing the model-free price of an at-the-money (ATM) type II forward start straddle $C^1_{II}$ as well as for the corresponding sub-hedging strategy. In particular, they show that the optimal martingale coupling for such a derivative is concentrated on a three points transition $\{p(x), x, q(x)\}$ where $p$ and $q$ are two suitable decreasing functions. The precise result will be recalled below. Such a characterization is obtained under a \textit{dispersion assumption} \cite[Assumption 2.1]{HobsonKlimmek2013} on the supports of the marginal laws: the support of $(\mu-\nu)^+$ is contained in a finite interval $E$ and the support of $(\nu-\mu)^+$ is contained in its complement $E^c$. Instead of working under such a condition on the supports, we would rather impose the following standing assumption.
\begin{assumption}
\label{assSingleMax}Let the following properties hold:\begin{enumerate}
\item[(i)] The measures $\mu$ and $\nu$ belong to $\mathcal P_1 ^d$;
\item[(ii)] $\delta F$ has a single local maximizer $m$.
\end{enumerate}
\end{assumption}

The main reason for working under this assumption in the rest of the paper is twofold: first, it makes our treatment more uniform, since later we will consider \cite{BrenierMartingale} construction of the right and left monotone transference plans and their construction holds if the marginals are absolutely continuous, whence our Assumption \ref{assSingleMax}(i). Moreover, in the case of marginals with densities, Assumption \ref{assSingleMax}(ii) is equivalent to the dispersion assumption in \cite{HobsonKlimmek2013} (as we show in Remark \ref{assumpHK} below) and it simplifies the study of \cite{BrenierMartingale} construction in the next section.

\begin{remark}\label{assumpHK}
\emph{Let $\mu, \nu \in \mathcal{P}^d _1$ with $\mu \preccurlyeq \nu$. Then Assumption 2.1 in \cite{HobsonKlimmek2013} is equivalent to our Assumption \ref{assSingleMax}(ii). To see this, let $\mu, \nu \in \mathcal{P}_1$ with $\mu \preccurlyeq \nu$. First, observe that 
\[ \textrm{supp} ((\mu-\nu)^+) = \text{cl} \left\{x \in \mathbb R_+^* : \int_{(x-\epsilon, x+\epsilon)} (p_\mu(z)-p_\nu(z)) dz >0, \text{for some $\epsilon>0$}\right\},\]
where $\text{cl}(A)$ denotes the closure of any subset $A \subset \mathbb R_+ ^*$.
Suppose that Assumption 2.1 in \cite{HobsonKlimmek2013} holds, i.e. there exist constants $0 \le a < b$ such that 
\[p_\mu(x)-p_\nu(x) > 0 \text{ for } x \in (a,b) \quad \text{and}\quad  p_\mu(x)-p_\nu(x)\leq 0 \text{ for } x\in (a,b)^c.\]
Consequently, $\delta F$ is decreasing on $[a,b]$ and increasing on $(0,a)$ and $(b, \infty)$. Hence it admits a unique maximizer at $a$ and a unique minimizer at $b$, whence Assumption \ref{assSingleMax} follows.
Conversely, suppose that Assumption \ref{assSingleMax} holds. Then $\dF$ admits a global maximum in $m>0$. Moreover, by the convex order of $\mu$ and $\nu$, $\dF$ admits a global minimum at $\tilde m > m$. Hence, for all $x\in(m,\tilde m)$ we have $p_\mu(x)-p_\nu(x) > 0$, while for all $x\in(m,\tilde m)^c$ we have $p_\mu(x)-p_\nu(x) \leq 0$, and finally Assumption 2.1 in \cite{HobsonKlimmek2013} is fulfilled.}
\end{remark}

\begin{remark}\label{ass-preserve}
\emph{Both properties in Assumptions \ref{assSingleMax} are preserved under change of numeraire. Indeed, we have already seen in Lemma \ref{symmetry} that $S(\mu),S(\nu)$ belong to $\mathcal P_1 ^d$. Concerning property (ii) in the assumption, note that
$$F_{S(\mu)}(y) = \int_0^y \frac{p_\mu(\frac{1}{x})}{x^3} dx = 1- \int_0^{1/y} x p_\mu(x) dx,$$ 
so that $$\delta F_S(y)= F_{S(\nu)}-F_{S(\mu)} = - \int_0^{1/y} x \partial_x (\delta F)(x) dx .$$
Hence, $\delta F_S$ has a single maximizer $x^S_\star$ if and only if $\delta F$ has a single minimizer $x^\star$, satisfying $x^\star=\frac{1}{x^S_\star}$. }
\end{remark}

Let us come back to the model-free pricing of forward start straddles. Given the form of the payoff (\ref{FSS1}), it is very natural to try to obtain an optimal martingale coupling for its model-free sub-hedging price combining the change of numeraire techniques with \cite{HobsonKlimmek2013} results. For reader's convenience, we summarize their main result in the following theorem. It is a consequence of Theorem 5.4 and Theorem 5.5 in \cite{HobsonKlimmek2013} applied to the particular case when the marginals $\mu,\nu$ have densities (see their Subsection 6.1). Therefore, its proof is omitted.

\begin{theorem}
\label{ExistenceUniqHK}
Let Assumption \ref{assSingleMax} hold. Then there exists a unique optimal coupling $\Q_{HK}(\mu,\nu) \in \mathcal{M}(\mu, \nu)$ such that
\begin{equation} \label{infFwdStartII}
\underline{P}(\mu, \nu, C^1_{II}) := \inf_{Q \in \mathcal{M}(\mu, \nu)} \mathbb{E}^Q\left[ |Y-X| \right]= \mathbb{E}^{\Q_{HK}(\mu,\nu)}\left[|Y-X|\right].
\end{equation}
Moreover, $\Q_{HK}(\mu,\nu)(dx,dy) = \mu(dx)\mathcal L_{HK}(x,dy)$, with a transition kernel $\mathcal L_{HK}$ given by
\begin{equation}
\label{kernelHK}
\mathcal{L}_{HK}(x,\cdot) = \delta_x \1_{x\leq a} + (l(x)\delta_{p(x)} + u(x)\delta_{q(x)} + (1-l(x)-u(x)) \delta_{x} ) \1_{a< x< b} + \delta_{x} \1_{x\geq b},\end{equation}
where: \begin{enumerate}
\item $a$ (resp. $b$) is the global maximizer (resp. minimizer) of  $\delta F$;
\item $p: (a,b)\rightarrow [0,a]$ and $q: (a,b) \rightarrow [b,\infty]$ are continuous decreasing functions solutions to the equations
\begin{equation}
\begin{aligned}
\label{equatioHK}
\delta F(q(x)) + \delta F(p(x)) &= \delta F(x), \\
\delta G(q(x)) + \delta G(p(x)) &= \delta G(x), \quad x \in (a,b).
\end{aligned}
\end{equation} 
\item $l,u: (a,b) \rightarrow [0,1]$ are given by
\begin{equation}
\begin{aligned}
u(x) &= \frac{x-p(x)}{q(x)-p(x)} \frac{p_\mu(x)-p_\nu(x)}{p_\mu(x)}, \\
l(x) &= \frac{q(x)-x}{q(x)-p(x)} \frac{p_\mu(x)-p_\nu(x)}{p_\mu(x)}.
\end{aligned}
\end{equation}
\end{enumerate}  
\end{theorem}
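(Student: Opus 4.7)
The approach I would take is the standard martingale optimal transport duality à la Beiglböck--Henry-Labordère--Penkner: for the continuous payoff $C^{1}_{II}(x,y)=|y-x|$ with linear growth, we have
\[
\underline P(\mu,\nu,C^1_{II})=\sup\Bigl\{\mu[\varphi]+\nu[\psi]\;:\;\varphi(x)+\psi(y)+h(x)(y-x)\le |y-x|\Bigr\},
\]
so it suffices to produce both a feasible coupling $Q^\star\in\mathcal M(\mu,\nu)$ and a dual triple $(\varphi^\star,\psi^\star,h^\star)$ whose values agree. The candidate $Q^\star$ is dictated by the kernel $\mathcal L_{HK}$ in \eqref{kernelHK}, and the heuristic is clear: on the ``central'' zone $(a,b)$ where $p_\mu>p_\nu$ (by Remark \ref{assumpHK}) one must evacuate the excess mass $p_\mu-p_\nu$ toward $[0,a]$ and $[b,\infty)$ where $p_\mu\le p_\nu$; elsewhere one leaves $Y=X$.

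Next I would construct the functions $p,q$. The martingale constraint $\mathbb E^{Q^\star}[Y|X=x]=x$ on $(a,b)$ forces $l(x)(x-p(x))=u(x)(q(x)-x)$, which combined with the requirement that the total mass removed at $x$ equal $(p_\mu-p_\nu)(x)$ yields exactly the formulas for $l,u$ in terms of $p,q$ stated in the theorem. Matching the marginal $\nu$ on $[0,a]$ (respectively $[b,\infty)$) integrated against test functions then amounts to
\[
\int_{p(x)}^{a}(p_\nu-p_\mu)(y)\,dy=\int_{a}^{x}(p_\mu-p_\nu)(y)\,dy \quad\textrm{and}\quad \int_{b}^{q(x)}(p_\nu-p_\mu)(y)\,dy=\int_{x}^{b}(p_\mu-p_\nu)(y)\,dy,
\]
which rewrite precisely as the system \eqref{equatioHK}. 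Under Assumption \ref{assSingleMax}, $\delta F$ is strictly monotone on each of $(0,a)$, $(a,b)$, $(b,\infty)$ with strict sign patterns, so the implicit function theorem applied to $(\delta F,\delta G)(p,q)=(\delta F,\delta G)(x)$ yields continuous, strictly decreasing solutions $p:(a,b)\to[0,a]$ and $q:(a,b)\to[b,\infty]$; monotonicity propagates since $\delta G'=\mathrm{id}\cdot(p_\nu-p_\mu)$ has the same sign as $\delta F'$ on each subinterval.

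For optimality I would follow the Hobson--Klimmek recipe: pick $h^\star$ so that the subhedging inequality is tight on the three-point support of $\mathcal L_{HK}(x,\cdot)$, which forces $\psi^\star$ to satisfy an ODE obtained by differentiating the tangency conditions at $y=p(x)$, $y=x$, $y=q(x)$; integrating that ODE produces $\psi^\star$ (and then $\varphi^\star$) explicitly, and one checks that $\varphi^\star(x)+\psi^\star(y)+h^\star(x)(y-x)\le |y-x|$ holds globally, with equality on $\mathrm{supp}(Q^\star)$. Plugging into Fubini then gives $\mu[\varphi^\star]+\nu[\psi^\star]=\mathbb E^{Q^\star}[|Y-X|]$, so by weak duality $Q^\star$ is optimal. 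Uniqueness follows because the dual inequality is strict off the graph $\{(x,x),(x,p(x)),(x,q(x))\}$, forcing any minimizer to be concentrated on it, after which the martingale constraint and the marginals pin down the weights $l,u,1-l-u$ uniquely.

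The main obstacle is the last step: producing a globally admissible dual triple and verifying the worldwide inequality $\varphi^\star(x)+\psi^\star(y)+h^\star(x)(y-x)\le |y-x|$ off the support. The pointwise tangency at three $y$-values forces a delicate choice of $h^\star$ (essentially the left/right derivative of $|y-x|$ in a regime-dependent way), and one must check that the resulting piecewise-defined $\psi^\star$ extends consistently across the thresholds $a,b$ and dominates the payoff for all $(x,y)$; this is where Assumption \ref{assSingleMax}(ii), through its equivalent dispersion form, is crucial because it ensures that the two ODE constraints in \eqref{equatioHK} are simultaneously solvable by monotone functions so that the dual ansatz does not develop sign changes.
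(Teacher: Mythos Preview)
The paper does not prove this theorem: it simply records that the statement is a consequence of Theorems~5.4 and~5.5 in Hobson--Klimmek (2013), specialised to the density case (their Subsection~6.1), and omits the proof. Your sketch is therefore not in conflict with the paper's argument; rather, you are outlining the Hobson--Klimmek proof itself (primal candidate plus dual subhedge, complementary slackness), which is the right strategy.

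There is, however, a concrete error in your derivation of the system~\eqref{equatioHK}. Your two displayed mass-balance identities
\[
\int_{p(x)}^{a}(p_\nu-p_\mu)\,dy=\int_{a}^{x}(p_\mu-p_\nu)\,dy,
\qquad
\int_{b}^{q(x)}(p_\nu-p_\mu)\,dy=\int_{x}^{b}(p_\mu-p_\nu)\,dy
\]
are \emph{decoupled} in $p$ and $q$; evaluating them gives $\delta F(p(x))=\delta F(x)$ and $\delta F(q(x))=\delta F(x)$ separately, which does \emph{not} ``rewrite precisely as'' the coupled system $\delta F(p)+\delta F(q)=\delta F(x)$, $\delta G(p)+\delta G(q)=\delta G(x)$. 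The flaw is that the excess mass removed from $(a,x)$ is split between the left target $[p(x),a]$ and the right target $[b,\infty)$, so neither side receives all of it. The correct bookkeeping is: differentiating the $\nu$-marginal constraint yields
\[
(p_\mu-p_\nu)(x)=-(p_\nu-p_\mu)(p(x))\,p'(x)-(p_\nu-p_\mu)(q(x))\,q'(x),
\]
which integrates to the first equation of~\eqref{equatioHK}; the second equation comes not from a separate mass balance but from the \emph{first-moment} (martingale) constraint, which after the same manipulation integrates to the $\delta G$ relation. Both equations therefore involve $p$ and $q$ jointly, and your implicit-function step must treat the $2\times 2$ system $(\delta F,\delta G)(p,q)=(\delta F,\delta G)(x)$ with a nonsingularity check on the Jacobian, not two scalar equations solved independently.
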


Now, a simple application of change of numeraire results from the previous section gives that $\Q_{HK}(\mu,\nu)$ attains the lower bound price for the type I forward start straddle $C_I^1$ as well. This result complements the one in \cite{HobsonKlimmek2013} about type II forward start straddle $C_{II}^1$. We show first a symmetry property of Hobson-Klimmek optimal coupling.

\begin{proposition}
\label{HKSymm}
The martingale measure $\Q_{HK}(\mu,\nu)$ verifies the symmetry relation
\begin{equation*}
\S \left(\Q_{HK}(S(\mu), S(\nu))\right) = \Q_{HK}(\mu,\nu)
\end{equation*}
where the symmetry operator $\S$ is defined in \ref{SS}.
\end{proposition}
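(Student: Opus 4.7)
The plan is to identify $Q' := \S(\Q_{HK}(S(\mu), S(\nu)))$ with $\Q_{HK}(\mu, \nu)$ by verifying that $Q'$ satisfies the characterization of Theorem \ref{ExistenceUniqHK} and then invoking its uniqueness clause. Admissibility is automatic from Lemma \ref{PropSymmetry}: since $\S$ is an involution and $\S(\mathcal M(S(\mu), S(\nu))) = \mathcal M(\mu, \nu)$, we have $Q' \in \mathcal M(\mu, \nu)$.

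To compute the kernel of $Q'$, write $a_S, b_S, p_S, q_S, l_S, u_S$ for the HK data associated with the marginals $(S(\mu), S(\nu))$. Unwinding (\ref{SS}) with the change of variable $u = 1/x$ and the density formula (\ref{pSmu}) expresses $Q'$ as a disintegration along $\mu$ whose kernel has precisely the three-point form (\ref{kernelHK}) on the interval $(a, b) = (1/b_S, 1/a_S)$ (these are the correct thresholds for $(\mu, \nu)$ by Remark \ref{ass-preserve}), with atoms
\[
\tilde p(u) := 1/q_S(1/u) \in [0,a], \qquad \tilde q(u) := 1/p_S(1/u) \in [b,\infty],
\]
obtained after the inversion exchanges the ``low'' range of $p_S$ with the ``high'' range of $q_S$. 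The identities $\delta F_S(y) = -\delta G(1/y)$ and $\delta G_S(y) = -\delta F(1/y)$ (direct consequences of Lemma \ref{symmetry}) turn the HK system (\ref{equatioHK}) satisfied by $(p_S, q_S)$ on $(a_S, b_S)$ into exactly (\ref{equatioHK}) for $(\tilde p, \tilde q)$ on $(a, b)$ with marginals $(\mu, \nu)$; a parallel calculation using (\ref{pSmu}) confirms that the three-point weights of $Q'$ match the formulas for $l(u), u(u)$ of Theorem \ref{ExistenceUniqHK}.

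Thus $Q'$ is a martingale measure in $\mathcal M(\mu, \nu)$ whose kernel falls under the characterization of Theorem \ref{ExistenceUniqHK}, so its uniqueness clause yields $Q' = \Q_{HK}(\mu, \nu)$, which is the claimed equality. The main obstacle is really the change-of-variable bookkeeping in Step 2: the operator $\S$ combines the coordinatewise inversion $(x,y)\mapsto (1/x,1/y)$ with the Radon--Nikodym factor $y$ from (\ref{SS}), so one must keep careful track of the Jacobian $1/u^2$, of the density transformation (\ref{pSmu}), and of the swap $p \leftrightarrow q$ induced by the inversion. Once these identifications are pinned down, the verification of the HK equations and weights in Step 3 is a mechanical substitution via Lemma \ref{symmetry}.
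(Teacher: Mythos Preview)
Your proof is correct and follows essentially the same route as the paper's: compute the three-point support of the transformed measure, use the symmetry identities $\delta F_S(y)=-\delta G(1/y)$ and $\delta G_S(y)=-\delta F(1/y)$ to verify that the transformed pair satisfies the HK system (\ref{equatioHK}), and conclude by uniqueness. You are in fact a bit more careful than the paper in two places: you make the swap $\tilde p=1/q_S(1/\cdot)$, $\tilde q=1/p_S(1/\cdot)$ explicit (the paper just lists the two functions without identifying which plays which role), and you note that the transition weights should also be checked, which the paper leaves implicit.
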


\begin{proof}
Let the pair $(p^S,q^S)$ define the measure $\Q_{HK}(S(\mu), S(\nu))$.
A simple computation shows that the measure $\mathbb S( \Q_{HK}(S(\mu), S(\nu)))$ is concentrated on $\{\frac{1}{p^S(1/x)}, x, \frac{1}{q^S(1/x)}\}$. In order to get the equations satisfied by this three-band graph, recall first the symmetry relations
\begin{equation}\label{FGsymm}
\dF^{S}(y) = -\dG(1/y), \quad \dG^{S}(y) = -\dF(1/y).
\end{equation}
By definition, $(p^S,q^S)$ is characterized by the two equations
\begin{eqnarray*}
\dF^S(q^S(x)) + \dF^S(p^S(x)) &=& \delta F^S(x), \\
\dG^S(q^S(x)) + \dG^S(p^S(x)) &=& \delta G^S(x).
\end{eqnarray*}
Hence, using (\ref{FGsymm}) we have
\begin{eqnarray*}
\dF(1/q^S(1/x)) + \dF(1/p^S(1/x)) &=& \dF(x), \\
\dG(1/q^S(1/x)) + \dG(1/p^S(1/x)) &=& \dG(x).
\end{eqnarray*}
Since the functions $x \mapsto 1/p^S(1/x)$ and $x \mapsto 1/q^S(1/x)$ are both continuous decreasing and satisfy the same equations as the pair $(p,q)$, they are candidates. Hence, the uniqueness of the optimal coupling yields the result.
\end{proof}

At this point we can exploit a symmetry relation between type I and type II forward start straddles, which is given by
\begin{equation}
\label{fwdStraddleI_II}
\S^*(C^\alpha_{II})(X,Y) = Y\left|\frac{1}{Y}-\frac{\alpha}{X}\right| = \alpha\left|\frac{Y}{X}-\frac{1}{\alpha}\right| = \alpha C^{\frac{1}{\alpha}}_{I}.
\end{equation}
In particular, the ATM straddles, i.e. $\alpha =1$, are related by $\S^*(C^1_{II})(X,Y) = C^1_{I}(X,Y)$. A consequence of this is the following proposition, that states the announced result on forward start straddle of type I and concludes the section.

\begin{proposition}
The lower bound price of the ATM forward start straddle of type I is also attained by optimal coupling $\mathbb Q_{HK}(\mu,\nu)$, i.e.
\begin{equation} \label{infFwdStartI}
\underline{P}(\mu, \nu, C^1_{I}) := \inf_{Q \in \mathcal{M}(\mu, \nu)} \mathbb{E}^Q \left[\left|\frac{Y}{X}-1\right| 
\right]= \mathbb E^{\Q_{HK}(\mu, \nu)} \left[ C^1_{I} \right].
\end{equation} 
\end{proposition}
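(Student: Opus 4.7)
The plan is to derive the identity by chaining together three already-established results: the symmetry of the lower price under change of numeraire (Proposition \ref{PropSymmetry2}), the symmetry identity $\S^*(C^1_{II})=C^1_{I}$ in (\ref{fwdStraddleI_II}), Hobson--Klimmek's Theorem \ref{ExistenceUniqHK} applied to the transformed marginals, and the symmetry of the optimal coupling itself (Proposition \ref{HKSymm}).

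First, I would apply Proposition \ref{PropSymmetry2} with the marginals $S(\mu),S(\nu)$ in place of $\mu,\nu$, using that $S$ is an involution (Lemma \ref{symmetry}); combined with (\ref{fwdStraddleI_II}) this yields
\begin{equation*}
\underline{P}(\mu,\nu,C^1_{I})=\underline{P}(\mu,\nu,\S^*(C^1_{II}))=\underline{P}(S(\mu),S(\nu),C^1_{II}).
\end{equation*}
Next, Remark \ref{ass-preserve} guarantees that $S(\mu),S(\nu)$ also satisfy Assumption \ref{assSingleMax}, so Theorem \ref{ExistenceUniqHK} applies to the pair $(S(\mu),S(\nu))$ and gives
\begin{equation*}
\underline{P}(S(\mu),S(\nu),C^1_{II})=\mathbb E^{\Q_{HK}(S(\mu),S(\nu))}[|Y-X|].
\end{equation*}

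Finally, since $\S$ is an involution (Lemma \ref{PropSymmetry}), Proposition \ref{HKSymm} can be rewritten as $\Q_{HK}(S(\mu),S(\nu))=\S(\Q_{HK}(\mu,\nu))$. Applying the definition (\ref{SS}) of $\S$ with the test function $f(x,y)=|y-x|$ gives
\begin{equation*}
\mathbb E^{\S(\Q_{HK}(\mu,\nu))}[|Y-X|]=\mathbb E^{\Q_{HK}(\mu,\nu)}\Bigl[Y\Bigl|\tfrac{1}{Y}-\tfrac{1}{X}\Bigr|\Bigr]=\mathbb E^{\Q_{HK}(\mu,\nu)}\Bigl[\Bigl|\tfrac{Y}{X}-1\Bigr|\Bigr]=\mathbb E^{\Q_{HK}(\mu,\nu)}[C^1_{I}],
\end{equation*}
and stringing the three equalities together yields (\ref{infFwdStartI}).

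There is no real obstacle here: all the heavy lifting has already been done in Proposition \ref{PropSymmetry2}, Proposition \ref{HKSymm} and the invariance statement of Remark \ref{ass-preserve}. The only point that must be checked carefully is that the hypotheses of Theorem \ref{ExistenceUniqHK} really do transfer to $(S(\mu),S(\nu))$ so that $\Q_{HK}(S(\mu),S(\nu))$ is well-defined and unique, but this is precisely what Remark \ref{ass-preserve} provides, and the uniqueness is what makes the identification $\Q_{HK}(S(\mu),S(\nu))=\S(\Q_{HK}(\mu,\nu))$ legitimate.
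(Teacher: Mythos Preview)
Your proof is correct and follows essentially the same approach as the paper: use Proposition~\ref{PropSymmetry2} together with (\ref{fwdStraddleI_II}) to reduce to $\underline{P}(S(\mu),S(\nu),C^1_{II})$, invoke Theorem~\ref{ExistenceUniqHK} for the transformed marginals, and then use Proposition~\ref{HKSymm} to rewrite the expectation under $\Q_{HK}(\mu,\nu)$. Your version is in fact slightly more careful than the paper's in that you explicitly invoke Remark~\ref{ass-preserve} to justify that Theorem~\ref{ExistenceUniqHK} applies to $(S(\mu),S(\nu))$, a point the paper leaves implicit.
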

\begin{proof} Using Proposition \ref{PropSymmetry2} and the relation (\ref{fwdStraddleI_II}), we have 
\begin{eqnarray*}
\underline{P}(\mu, \nu, C^1_{I}) &=& \underline{P}(S(\mu), S(\nu), \S^*(C^1_{I}))\\
&=& \underline{P}(S(\mu), S(\nu), C^1_{II} )\\
&=& \mathbb E^{\Q_{HK}(S(\mu), S(\nu))} \left[ C^1_{II}\right] \\
&=& \mathbb E^{\Q_{HK}(\mu, \nu)} \left[ C^1_{I} \right].
\end{eqnarray*}
The proof is therefore complete.
\end{proof}

\section{Symmetry properties of left and right monotone transference plans}\label{Sec:HLT}
\label{left}

The optimization problems in \eqref{sup} are strongly related to the concepts of \textit{right} and \textit{left monotone transference plans}. Both notions were introduced in \cite{BeiglJuil}, who show their existence and uniqueness for convex ordered marginals, and prove that they solve the maximization and the minimization problem in \eqref{sup} for a specific set of payoffs of the form $C(x,y) = h(y-x)$ with $h$ differentiable with strictly convex first derivative. \cite{BrenierMartingale} extend these results to a wider set of payoffs. Moreover they also give an \textit{explicit} construction of the left-monotone transference plan. In this section we want to study the symmetric property of those transference plans and show in particular that, in the case of positive martingales, the right monotone plan can be obtained from its left monotone counterpart with no effort via change of numeraire.

We start by recalling the general definition of right and left monotone transference plan.

\begin{definition}[\cite{BeiglJuil}]
A martingale measure $Q \in \mathcal M(\mu,\nu)$ is \emph{left-monotone} (resp. \emph{right-monotone}) if there exists a Borel set $\Gamma \subset (\mathbb R_+ ^*)^2$ with $Q(\Gamma)=1$ such that for all $(x,y^-),(x,y^+)$ and $(x',y')$ in $\Gamma$ we cannot have $x<x'$ and $y^- < y' < y^+$ (resp. $x>x'$ and $y^- < y' < y^+$). We denote $Q_L(\mu,\nu)$ (resp. $Q_R (\mu,\nu)$) the left-monotone (resp. right-monotone) transference plan with marginals $\mu,\nu$.
\end{definition}

The next result states how the two monotone transference plans relate to each other via the symmetry operators.

\begin{proposition}\label{LM-RM}
The operator $\S$ exchanges left-monotone and right-monotone transference plans, i.e. $\S(Q_R (S(\mu),S(\nu))) = Q_L(\mu,\nu)$ and $\S(Q_L(S(\mu),S(\nu)))=Q_R(\mu,\nu)$.
\end{proposition}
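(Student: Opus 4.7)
The plan is to prove the first identity $\S(Q_R(S(\mu),S(\nu))) = Q_L(\mu,\nu)$; the second is then obtained by applying the first with $(\mu,\nu)$ replaced by $(S(\mu),S(\nu))$ and invoking that $S$ and $\S$ are involutions (Lemmas \ref{symmetry} and \ref{PropSymmetry}). Setting $Q:=Q_R(S(\mu),S(\nu))$, Lemma \ref{PropSymmetry}(1) already gives $\S(Q)\in\mathcal{M}(\mu,\nu)$, so by uniqueness of the left-monotone transference plan it is enough to exhibit a Borel set $\Gamma'\subset(\R_+^*)^2$ of full $\S(Q)$-measure on which the left-monotone obstruction holds.

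The construction of $\Gamma'$ is forced by the definition of $\S$: since, for every Borel set $A$,
\begin{equation*}
\S(Q)(A) = \mathbb E^Q[Y\,\mathbf 1_A(1/X,1/Y)],
\end{equation*}
I would take $\Gamma':=\phi(\Gamma)$, where $\Gamma$ is a Borel witness of right-monotonicity for $Q$ and $\phi(x,y):=(1/x,1/y)$ is the inversion homeomorphism of $(\R_+^*)^2$, which is its own inverse and hence sends Borel sets to Borel sets. Then $\mathbf 1_{\Gamma'}(1/X,1/Y) = \mathbf 1_\Gamma(X,Y)$, so $\S(Q)(\Gamma') = \mathbb E^Q[Y\,\mathbf 1_\Gamma] = \mathbb E^Q[Y] = 1$ because $Y\sim S(\nu)\in\mathcal P_1$.

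The core combinatorial step is to check that $\Gamma'$ is left-monotone. Suppose three points $(x_0,y_0^-),(x_0,y_0^+),(x_0',y_0')\in\Gamma'$ violated the left-monotone condition, i.e.\ $x_0<x_0'$ and $y_0^-<y_0'<y_0^+$. Applying $\phi$ sends them to three points of $\Gamma$; the first two still share a common first coordinate (namely $1/x_0$), and since inversion reverses order on $(0,\infty)$, the inequalities become $1/x_0>1/x_0'$ and $1/y_0^+<1/y_0'<1/y_0^-$. This is precisely the configuration forbidden by right-monotonicity of $\Gamma$, yielding the contradiction.

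I expect no single step to be the real obstacle; the proof is essentially a bookkeeping exercise. The only two things that must be verified with care are that the weight $Y$ appearing in the definition of $\S$ is exactly what makes $\Gamma'$ of full mass under $\S(Q)$ (rather than merely of full mass up to a constant), and that each of the three inequalities flips in the intended direction under $\phi$, so that ``$x_0 < x_0'$ plus a separating middle $y_0'$'' on the $\S(Q)$-side corresponds to ``$x>x'$ plus a separating middle $y'$'' on the $Q$-side.
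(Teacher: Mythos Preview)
Your proof is correct and follows essentially the same approach as the paper: define the candidate witness set as the image of $\Gamma$ under $(x,y)\mapsto(1/x,1/y)$, verify it has full $\S(Q)$-mass via the identity $\S(Q)(A)=\mathbb E^Q[Y\mathbf 1_A(1/X,1/Y)]$ together with $\mathbb E^Q[Y]=1$, check that inversion flips the order so that a forbidden left-monotone configuration maps to a forbidden right-monotone one, and conclude by uniqueness of the left-monotone plan. Your write-up is in fact slightly more careful than the paper's (you note explicitly that $\phi$ is a Borel isomorphism and that the second identity also requires $S$ to be an involution, not only $\S$).
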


\begin{proof}
We prove only the first equality, as the second follows immediately since $\S$ is an involution. By definition of the right-monotone transference plan $Q_R ^S := Q_R(S(\mu),S(\nu))$, there exists a Borel set $\Gamma_R \subset (\mathbb R_+ ^*)^2$ such that $Q^S _R (\Gamma_R) =1$ and for all $(x,y^-),(x,y^+),(x',y')$ in $\Gamma_R$ we cannot have $x > x'$ and $y^- < y' < y^+$. Let
\[ \Gamma^\S _R := \{ (x,y) \in (\mathbb R_+ ^*)^2 : (1/x,1/y) \in \Gamma_R \}.\]
We clearly have
\[ \S(Q_R ^S) (\Gamma^\S _R) = Q^S _R [Y\mathbf 1_{\Gamma^\S _R}(1/X,1/Y)] = Q^S _R [Y \mathbf 1_{\Gamma_R}(X,Y)] = Q^S _R(Y) = 1.\]
Moreover, since $(x,y^-),(x,y^+),(x',y') \in \Gamma^\S _R$ if and only $(1/x,1/y^-),(1/x,1/y^+),(1/x',1/y') \in \Gamma_R$, we cannot have $x < x'$ and $y^- < y' < y^+$. Therefore, we have $\S(Q_R (S(\mu),S(\nu))) \in \mathcal M(\mu,\nu)$, hence by uniqueness of the left-monotone transference plan (see Theorem 1.5 in \cite{BeiglJuil}) we obtain $\S(Q^S _R)=Q_L(\mu,\nu)$. 
\end{proof}

\begin{remark}
\emph{We observe that, as a by-product of the previous result, the existence of left-monotone transference plan gives for free the existence of its right-monotone analogue via the symmetry operator $\S$ and vice-versa. Moreover, notice also that the result above holds in full generality, e.g. even when the marginals do not have densities.} 
\end{remark}

Building on the results in \cite{BeiglJuil}, \cite{BrenierMartingale} show in particular that $\Q_L(\mu,\nu)$ attains the upper bound \eqref{sup} for a larger class of payoffs verifying a generalized Spence-Mirrlees type condition $C_{xyy}>0$ (or $C_{xyy}<0$) (see their Theorem 5.1). We summarize their result in the following theorem.\footnote{Observe that the results in Theorem \ref{ThmHLT} hold under more general conditions than our Assumption \ref{assSingleMax}(ii).} \medskip

\begin{theorem}[\cite{BrenierMartingale}] \label{ThmHLT}
Let $C : (\mathbb R_+ ^*)^2 \to \mathbb R$ be a measurable function such that the partial derivative $C_{xyy}$ exists and $C_{xyy}>0$. Under Assumption \ref{assSingleMax}, the left-monotone transference plan $Q_L=Q_L (\mu,\nu)$ is the optimal coupling solving the martingale transport problem
\[ \overline P (\mu,\nu,C) := \sup_{Q \in \mathcal M(\mu,\nu)} \mathbb E^Q[C(X,Y)].\] 
\end{theorem}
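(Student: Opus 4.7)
My approach is via the martingale super-replication duality, which asserts that
\[ \overline P(\mu,\nu,C) \;=\; \inf\Bigl\{\mu[\varphi]+\nu[\psi]\;:\;\varphi(x)+\psi(y)+h(x)(y-x)\ge C(x,y)\ \forall x,y>0\Bigr\}, \]
where the infimum runs over triples $(\varphi,\psi,h)\in L^{1}(\mu)\times L^{1}(\nu)\times L^{0}$ (see Beiglb\"ock--Henry-Labord\`ere--Penkner, and observe that this is exactly the dual side of the classes $\mathcal H(\mu,\nu)$ used in the previous section). The plan is to produce an \emph{explicit} dual triple $(\varphi,\psi,h)$ that is both admissible (i.e.\ verifies the pointwise super-replication inequality) and tight on the support of $Q_L$, so that weak duality becomes an equality realised by $Q_L$.

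To build the triple I would use the explicit structure of $Q_L$: under Assumption~\ref{assSingleMax}, $Q_L$ is concentrated on a two--valued graph, i.e.\ there exist measurable maps $T_{d}\le \mathrm{id}\le T_{u}$ with $T_d$ decreasing and $T_u$ increasing on the ``dispersion'' region, such that the kernel at $x$ puts mass on $\{T_d(x),T_u(x)\}$ with weights fixed by the martingale and marginal constraints (this is precisely the construction of \cite{BrenierMartingale}). Fix $x$, and demand equality of the hedge and $C$ at the two atoms $y=T_d(x)$ and $y=T_u(x)$; these two linear conditions in $y$ determine $h(x)$ and $\varphi(x)$ once $\psi$ has been chosen. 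I would then define $\psi$ by the ODE
\[ \psi''(y) \;=\; C_{yy}\bigl(X_L(y),y\bigr), \]
where $X_L$ is the $x$--coordinate of the binding atom $y$ under $Q_L$ (so that $X_L$ is read off from the inverses of $T_d,T_u$). Integrability of $\varphi,\psi$ against $\mu,\nu$ follows from the linear growth hypothesis and the absolute continuity of the marginals in Assumption~\ref{assSingleMax}(i).

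With $(\varphi,\psi,h)$ in hand, the equality $\mathbb E^{Q_L}[C(X,Y)]=\mu[\varphi]+\nu[\psi]$ is immediate: by construction the hedge coincides with $C$ on the support of $Q_L$, and the term $h(X)(Y-X)$ vanishes in expectation under any $Q\in\mathcal M(\mu,\nu)$ by the martingale property. Combined with weak duality $\overline P(\mu,\nu,C)\le\mu[\varphi]+\nu[\psi]$, this produces both the supremum and the fact that $Q_L$ attains it, concluding the proof.

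The main obstacle is verifying the \emph{global} pointwise inequality $\varphi(x)+\psi(y)+h(x)(y-x)\ge C(x,y)$ for all $y>0$ with $x$ fixed. For this, set $F_x(y):=\varphi(x)+\psi(y)+h(x)(y-x)-C(x,y)$; by construction $F_x$ vanishes at $y=T_d(x)$ and $y=T_u(x)$, and a short calculation gives $F_x''(y)=\psi''(y)-C_{yy}(x,y)=C_{yy}(X_L(y),y)-C_{yy}(x,y)$. The generalized Spence--Mirrlees condition $C_{xyy}>0$ then translates this sign into a sign of $X_L(y)-x$, and the monotonicity of $X_L$ together with the left-monotone geometry of $Q_L$ ensures that the two zeros of $F_x$ together with this curvature pattern force $F_x\ge 0$ on $(0,\infty)$. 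Threading this curvature argument through a non-trivial dispersion region (with the mass-balance equations that define $T_d,T_u$) is the delicate point; everything else is a consequence of the duality framework and the martingale property.
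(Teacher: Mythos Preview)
The paper does not actually supply a proof of this theorem: it is quoted verbatim as Theorem~5.1 of \cite{BrenierMartingale}, with only the preamble ``We summarize their result in the following theorem'' and a footnote. So there is no in-paper argument to compare against.

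That said, your proof plan is precisely the strategy carried out in \cite{BrenierMartingale}: construct an explicit dual triple $(\varphi,\psi,h)$ by imposing the two tangency conditions at $T_d(x)$ and $T_u(x)$, fix $\psi$ through the second-order ODE $\psi''(y)=C_{yy}(X_L(y),y)$, and then use $C_{xyy}>0$ together with the left-monotone geometry of $(T_d,T_u)$ to control the sign of $F_x''$ and conclude that the super-replication inequality holds globally. Your identification of the ``delicate point'' (propagating the curvature argument across the dispersion region via the mass-balance equations for $T_d,T_u$) is exactly where the work lies in \cite{BrenierMartingale}. One small caveat: you invoke a linear-growth hypothesis on $C$ to obtain integrability of $\varphi,\psi$, but the statement as written here only assumes measurability and $C_{xyy}>0$; the original source imposes additional growth and regularity assumptions for this step, so strictly speaking the theorem as stated in this paper is a slight abuse, and your proof sketch would need those extra hypotheses made explicit.
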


In order to apply the change of numeraire approach, notice first that by the definition of $\S^*(C)$ we have
\begin{equation}\label{sym-SM}
\S^*(C)_{xyy}(x, y) = -\dfrac{1}{x^2y^3}C_{xyy}\left(\frac{1}{x}, \frac{1}{y}\right), \; \forall x,y>0.
\end{equation}
Hence, we have that $C_{xyy}>0$ holds true if and only if $\S^*(C)_{xyy}<0$. This elementary remark allows to find the model-free price bounds for payoffs verifying $C_{xyy}<0$ by changing the numeraire. This is similar to what happens with the mirror coupling in \cite[Remark 5.2]{BrenierMartingale}, where the marginals have support in $\mathbb R$. The symmetry operators $S$ and $\S$ permit to handle this case for $\mathbb R_+ ^*$-supported marginals.

To make this observation more precise, let $C(x,y)$ be a payoff satisfying $C_{xyy}<0$. Hence $\S^*(C)_{xyy}>0$ and by Proposition \ref{PropSymmetry2} we have
\begin{eqnarray*}
\overline{P}(\mu, \nu, C) &=& \overline{P}(S(\mu), S(\nu), \S^*(C)) \\
&=& \mathbb{E}^{\Q_L(S(\mu), S(\nu))}\left[\S^*(C)(X,Y)\right] \\
&=& \mathbb{E}^{\S\left(\Q_L(S(\mu), S(\nu))\right)}\left[C(X,Y)\right].
\end{eqnarray*}
Therefore, $\overline{P}(\mu, \nu, C)$ is attained by $\S\left(\Q_L(S(\mu), S(\nu))\right)$, which is equal to
$\Q_R(\mu,\nu)$ by Proposition \ref{LM-RM}. One can prove in a similar way that if $C_{xyy}>0$ (resp. $C_{xyy}<0$), the lower bound in \eqref{sup}  is attained by $\Q_R(\mu,\nu)$ (resp. $\Q_L(\mu,\nu)$).

\begin{remark}
\emph{We say that a payoff function $C$ is \textit{symmetric} if it satisfies $\S^*(C)=C$.\footnote{A way of constructing a symmetric payoff $C$ goes as follows: choose its values on $[0,1]\times\R_+^*$ first, then for $(x,y)\in(1,\infty)\times\R_+^*$, set $C(x,y)=yC(1/x,1/y)$.
One may easily check that $C$ satisfies $\S^*(C)=C$.}
For any symmetric payoff $C$ verifying the slightly relaxed generalized Spence-Mirrlees condition $C_{xyy}\geq 0$, we can use \eqref{sym-SM} to get $C_{xyy}(x,y) = -\frac{1}{x^2y^3} C_{xyy}(\frac{1}{x}, \frac{1}{y})$, hence $C_{xyy}=0$. Integrating with respect to $y$ twice and with respect to $x$ once, we see that $C$ is necessarily of the form $C(x,y) = \varphi(x) + \psi(y) + h(x)(y-x)$, for some functions $\varphi, \psi$ and $h$.}
\end{remark}

\subsection{Explicit constructions of left and right-monotone transference plans and change of numeraire}
\label{right}

In this section we briefly recall the explicit construction of a left-monotone transference plan performed by \cite{BrenierMartingale} and we show how the change of numeraire can be used to generate, essentially for free, the basic right-monotone transport plan from its left-monotone counterpart via the symmetry operator. We stress that Assumption \ref{assSingleMax} is still in force. The explicit characterization of $Q_L$ in \cite{BrenierMartingale} is described, for reader's convenience, in the following theorem.

\begin{theorem}
\label{ExistenceUniqL}
Let Assumption \ref{assSingleMax} hold. The left-monotone transference plan $Q_L$ is given by $Q_L (dx,dy)=\mu(dx)\mathcal L_L (x,dy)$ with transition kernel
$$
\mathcal{L}_L (x,\cdot) = \delta_x \1_{x\leq x_\star} + (q_L(x)\delta_{L_u(x)} + (1-q_L(x))\delta_{L_d(x)}) \1_{x > x^\star}, 
$$
where $q_L(x) := \frac{x-L_d(x)}{L_u(x)-L_d(x)}$, $x_\star\in\R_+^*$ is the unique maximizer of $\delta F$ and $ L_d,L_u$ are positive continuous functions on $(0, \infty)$, such that:
\begin{enumerate}[label=\roman*)]
\item $L_d(x)=L_u(x)=x$, for $x \leq x_\star$;
\item $L_d(x)<x<L_u(x)$,  for $x > x_\star$;
\item on the interval $(x^\star, \infty)$, $L_d$ is decreasing, $L_u$ is increasing.
\end{enumerate}
Moreover $L_d$ is the unique solution to
\begin{equation} 
\label{Td}
F_{\nu}^{-1}( F_{\mu}(x) + \delta F (L_d(x)) ) = G_{\nu}^{-1}( G_{\mu}(x) +\delta G(L_d(x)) ), \quad x>x_\star,
\end{equation}
and $L_u$ is given by the relation
\begin{equation}
\label{Lu-Ld} 
F_\nu(L_u(x)) = F_\mu(x) + \dF(L_d(x)), \quad x > x_\star .
\end{equation}
\end{theorem}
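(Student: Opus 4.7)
The plan is to follow the explicit construction of \cite{BrenierMartingale}. Since the existence and uniqueness of the left-monotone plan is already guaranteed by Theorem 1.5 of \cite{BeiglJuil}, the task reduces to building an explicit coupling that satisfies both the marginal and martingale constraints \emph{and} is left-monotone; uniqueness then identifies it with $Q_L(\mu,\nu)$.

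\emph{Step 1 --- Deriving the system.} I would postulate the three-band ansatz
\[
\mathcal L_L(x,\cdot) = \delta_x \1_{x \leq x_\star} + \bigl(q_L(x)\delta_{L_u(x)} + (1-q_L(x))\delta_{L_d(x)}\bigr) \1_{x > x_\star},
\]
with $L_d(x) < x_\star < L_u(x)$. The martingale property immediately forces $q_L(x)=(x-L_d(x))/(L_u(x)-L_d(x))$. Evaluating $\mu \cdot \mathcal L_L = \nu$ on half-lines $(-\infty,y]$ and carefully accounting for the three contributions (stay-put, down-jump, up-jump) yields (\ref{Lu-Ld}); integrating against $y$ rather than $1$ yields the analogous $G$-identity, and eliminating $L_u$ between the two gives exactly (\ref{Td}).

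\emph{Step 2 --- Existence and uniqueness of $L_d$.} For each $x>x_\star$, introduce
\[
\Phi_x(z) = F_\nu^{-1}\bigl(F_\mu(x)+\delta F(z)\bigr) - G_\nu^{-1}\bigl(G_\mu(x)+\delta G(z)\bigr), \quad z\in(0,x_\star).
\]
Under Assumption \ref{assSingleMax}(ii), $\delta F$ is strictly increasing on $(0,x_\star)$, so the arguments stay inside $(0,1)$ and $\Phi_x$ is continuous. One then shows that $\Phi_x$ is strictly monotone in $z$ (by implicit differentiation, using that $p_\mu-p_\nu$ has constant sign on $(0,x_\star)$), and that it changes sign on $(0,x_\star)$; the convex order in the form $\int_0^y \delta F \geq 0$ together with $\int_0^\infty \delta F = 0$ controls the boundary behaviour. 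Hence $\Phi_x$ has a unique zero $L_d(x)$, and the implicit function theorem delivers continuity and strict monotonicity of $x \mapsto L_d(x)$. The definition $L_u(x):=F_\nu^{-1}(F_\mu(x)+\delta F(L_d(x)))$ then inherits continuity and monotonicity, and properties (i)--(iii) follow.

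\emph{Step 3 --- Left-monotonicity.} Take the support set
\[
\Gamma = \{(x,x): x\leq x_\star\} \cup \{(x,L_d(x)),\,(x,L_u(x)) : x>x_\star\},
\]
which by construction carries all the mass of the candidate plan. Consider $(x,y^-),(x,y^+),(x',y')\in\Gamma$ with $x<x'$. If $x\leq x_\star$ then $y^-=y^+=x$ and the forbidden configuration is vacuous. If $x>x_\star$, then $y^-=L_d(x),\,y^+=L_u(x)$, and the monotonicity of $L_d$ (decreasing) and $L_u$ (increasing) forces $L_d(x')<L_d(x)<L_u(x)<L_u(x')$, so $y'\in\{L_d(x'),L_u(x')\}$ cannot lie in $(L_d(x),L_u(x))$. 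This is precisely the left-monotone condition, and Theorem 1.5 of \cite{BeiglJuil} identifies the plan with $Q_L(\mu,\nu)$.

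\emph{The main obstacle} will be Step 2: showing that the implicit equation defining $L_d$ has a \emph{global} unique solution on $(x_\star,\infty)$, not merely a local one. The delicate point is to extract this single-crossing property of $\Phi_x$ from Assumption \ref{assSingleMax}(ii); a purely local implicit-function argument is insufficient, and one has to track the joint monotonic behaviour of $\delta F$ and $\delta G$ across the two regimes separated by $x_\star$ and the (unique) minimizer of $\delta F$.
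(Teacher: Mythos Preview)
The paper does not actually prove this theorem: its proof is a one-line citation to Theorem~4.5 and Section~3.4 of \cite{BrenierMartingale}. Your proposal goes further and attempts to reconstruct that argument, which is broadly the right idea; Steps~1 and~3 are correct and match the Henry-Labord\`ere--Touzi construction.

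Step~2, however, contains a genuine gap as written. The assertion that $\Phi_x$ is strictly monotone in $z$ ``using that $p_\mu-p_\nu$ has constant sign on $(0,x_\star)$'' is not justified: differentiating gives
\[
\Phi_x'(z)=\bigl(p_\nu(z)-p_\mu(z)\bigr)\Bigl(\frac{1}{p_\nu(a(z))}-\frac{z}{b(z)\,p_\nu(b(z))}\Bigr),
\]
with $a(z)=F_\nu^{-1}(F_\mu(x)+\delta F(z))$ and $b(z)=G_\nu^{-1}(G_\mu(x)+\delta G(z))$. The first factor does have a sign on $(0,x_\star)$, but the bracket does not, since $a(z)\neq b(z)$ in general. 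What \emph{is} true is that at any zero of $\Phi_x$ one has $a=b=L_u(x)>z$, so the bracket is positive there; this weaker statement (``$\Phi_x'>0$ at every root'') already yields uniqueness, and is closer to how the argument actually runs in \cite{BrenierMartingale}. You also need to delimit the domain of $\Phi_x$ (the arguments of $F_\nu^{-1}$ and $G_\nu^{-1}$ are not automatically in range for all $z\in(0,x_\star)$) and control the boundary behaviour to get existence. None of this is fatal, but Step~2 as stated is a heuristic rather than a proof; since the paper simply imports the result from \cite{BrenierMartingale}, the cleanest fix is to do the same, or else to replace the global-monotonicity claim by the ``positive derivative at every root'' argument above together with a careful endpoint analysis.
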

\begin{proof}
We refer to Theorem 4.5 in \cite{BrenierMartingale}. More details on the case of a single maximizer can be found in Section 3.4 therein.
\end{proof}

Now, using the fact that $\S(Q_L(S(\mu),S(\nu))) = Q_R(\mu,\nu)$ together with the characterization of the left-monotone transference plan given in the previous theorem, we can investigate how the quantities defining $Q_R$ and $Q_L$ are related to each other. Notice that, since both marginals have support in $\mathbb R_+ ^*$, the symmetry relation we use here is different than the one in Remark 5.2 in \cite{BrenierMartingale}.

\begin{proposition}
Let Assumption \ref{assSingleMax} hold. Then the right-monotone transference plan $Q_R$ is given by $Q_R (dx,dy) = \mu(dx)\mathcal L_R (x,dy)$ with transition kernel
$$
\mathcal{L}_R (x,\cdot) := \delta_x \1_{x\leq x^\star} + (q_R(x)\delta_{R_u(x)} + (1-q_R(x))\delta_{R_d(x)}) \1_{x>x^\star}
$$
where \begin{enumerate}
\item $x^\star =1/x^S_\star$ is the unique minimizer of $\delta F$;
\item $R_d(x) = \dfrac{1}{L^S_u(1/x)}$, $R_u(x) = \dfrac{1}{L^S_d(1/x)}$, for $x >0$;
\item the transition probability is given by 
$q_R(x) =\dfrac{x}{R_u(x)} ( 1-q^S_L(1/x))$, for $x>0$.
\end{enumerate}
\end{proposition}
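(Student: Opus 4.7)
The plan is to combine Proposition \ref{LM-RM}, which gives $Q_R(\mu,\nu) = \mathbb{S}(Q_L(S(\mu),S(\nu)))$, with the explicit description of the left-monotone transference plan from Theorem \ref{ExistenceUniqL} applied to the symmetrized marginals $(S(\mu),S(\nu))$. By Remark \ref{ass-preserve}, those marginals still satisfy Assumption \ref{assSingleMax}, and the unique maximizer of $\delta F_S$ is $x^S_\star = 1/x^\star$. Writing $Q_L^S := Q_L(S(\mu),S(\nu))$, Theorem \ref{ExistenceUniqL} gives $Q_L^S(dx',dy') = S(\mu)(dx')\,\mathcal L_L^S(x',dy')$ where $\mathcal L_L^S$ is built from $L_d^S$, $L_u^S$, and $q_L^S$, with the ``do nothing'' region $x' \le x^S_\star$ and the split region $x' > x^S_\star$.

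The next step is to compute $\mathbb{S}(Q_L^S)$ against an arbitrary bounded measurable $f:(\mathbb{R}_+^*)^2\to\mathbb{R}$:
\begin{equation*}
\mathbb{E}^{\mathbb{S}(Q_L^S)}[f(X,Y)] \;=\; \mathbb{E}^{Q_L^S}\!\left[Y\, f\!\left(\tfrac{1}{X},\tfrac{1}{Y}\right)\right] \;=\; \int S(\mu)(dx')\int \mathcal L_L^S(x',dy')\, y'\, f\!\left(\tfrac{1}{x'},\tfrac{1}{y'}\right),
\end{equation*}
and to split this integral according to the two regimes for $x'$. The key change of variables is $x=1/x'$, implemented through the fundamental identity $\int h(x')\,S(\mu)(dx') = \int h(1/x)\,x\,\mu(dx)$, which follows directly from the definition of $S$. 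Under this substitution, the indicator $\mathbf{1}_{x'\le x^S_\star}$ becomes an indicator in the variable $x$ relative to $x^\star = 1/x^S_\star$, while the factor $y'$ produced by $\mathbb{S}$ combines with the factor $x$ arising from the measure-change identity to yield a proper transition kernel against $\mu(dx)$.

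The final step is to identify the three ingredients stated in the proposition. The critical point $x^\star=1/x^S_\star$ is, by Remark \ref{ass-preserve}, the unique minimizer of $\delta F$. The two split values $R_d(x)=1/L_u^S(1/x)$ and $R_u(x)=1/L_d^S(1/x)$ appear as the images of the left-monotone split points under $y'\mapsto 1/y'$; the swap $L_u^S \leftrightarrow R_d$, $L_d^S \leftrightarrow R_u$ is forced by the fact that inversion is strictly decreasing, so the lower atom of the original plan becomes the upper atom of the image and vice versa. The weight $q_R(x)$ is then read off as the coefficient of $\delta_{R_u(x)}$ after collecting terms; a direct calculation gives $q_R(x) = x(1-q_L^S(1/x))/R_u(x)$, as claimed. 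The normalization $q_R+(1-q_R)=1$ and the martingale identity $q_R(x)R_u(x)+(1-q_R(x))R_d(x)=x$ reduce, after substituting the definitions of $R_d, R_u$, to the martingale property $q_L^S(1/x)L_u^S(1/x)+(1-q_L^S(1/x))L_d^S(1/x)=1/x$ enjoyed by $Q_L^S$ at the point $1/x$.

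The main obstacle is purely bookkeeping: one must keep the substitution $x=1/x'$ consistent across both regimes, and remember that the decreasing map $y\mapsto 1/y$ reverses the order of the two atoms in each split. Once these are tracked carefully, the three formulas for $x^\star$, for $(R_d,R_u)$, and for $q_R$ drop out of a single term-by-term matching against the claimed kernel $\mathcal L_R$.
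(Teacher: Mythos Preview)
Your proposal is correct and follows precisely the strategy of the paper's proof: apply Proposition \ref{LM-RM} together with Theorem \ref{ExistenceUniqL} (for the symmetrized marginals, whose validity is guaranteed by Lemma \ref{symmetry} and Remark \ref{ass-preserve}), and then verify by direct computation that $\mathbb S(Q_L^S)$ disintegrates as $\mu(dx)\mathcal L_R(x,dy)$. The paper simply states that this direct computation can be done and omits the details, whereas you spell out the change of variables $x=1/x'$, the swap of atoms under inversion, and the identification of $q_R$; in this sense your write-up is a faithful expansion of exactly what the paper leaves implicit.
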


\begin{proof} By Lemma \ref{symmetry}, if $\mu, \nu \in \mathcal{P}_1$ satisfy $\mu \preccurlyeq \nu$, then their images by the symmetry operator $S$ verify the same conditions, i.e : $S(\mu), S(\nu) \in \mathcal{P}_1$ and $S(\mu) \preccurlyeq S(\nu)$. By Remark \ref{ass-preserve} one has that $\delta F_S = \delta F_{S(\mu),S(\nu)}$ has a single local maximizer and Theorem \ref{ExistenceUniqL} gives that there exists a left-monotone transference plan $Q_L ^S := Q_L (S(\mu),S(\nu))$ characterized as in Theorem \ref{ExistenceUniqL}. 

To conclude, since we already know that $\S(Q_L ^S) = Q_R (\mu,\nu)$ (see Proposition \ref{LM-RM}), it suffices to check that the measure $\tilde Q$ defined as $\tilde Q (dx,dy) := \mu(dx) \mathcal L_R (dx,dy)$ with the kernel $\mathcal L_R$ defined as in the statement, satisfies
\[ \tilde Q [f(X,Y)] = \S(Q_L ^S)[f(X,Y)],\]
for all bounded measurable functions $f: (\mathbb R_+ ^*)^2 \to \mathbb R$. This can be done by direct computation using the formulas for $x^\star$, $R_d$ and $R_u$ given in the statement. The details are therefore omitted.
\end{proof}

\begin{remark} \emph{As a by-product of the previous proposition, we get the characterization of $Q_R$ in terms of a triplet $(x_\star, R_d, R_u)$, where $x_\star >0$ is the unique minimizer of $\delta F$ and $ R_d, R_u$ are positive continuous functions on $\mathbb R_+ ^*$, such that:
\begin{enumerate}[label=\roman*)]
\item $R_d(x)=R_u(x)=x$, for $x \geq x_\star$, and $R_d(x)<x<R_u(x)$,  for $x < x_\star$;
\item $R_d$ (resp. $R_u$) is increasing (resp. decreasing) on $(0, x_\star)$;
\item the transition kernel $\mathcal{L}_R$, i.e. $Q_R (dx,dy)=\mu(dx) \mathcal L_R(x,dy)$, is defined by 
$$
\mathcal{L}_R(x,\cdot) = \delta_x \1_{x\leq x_\star} + (q_R(x)\delta_{R_u(x)} + (1-q_R(x))\delta_{R_d(x)}) \1_{x>x_\star}
$$
where $q_R(x) := \frac{x-R_d(x)}{R_u(x)-R_d(x)}$.
\end{enumerate}
Finally, one can check that $R_d$ and $R_u$ are solutions to
\begin{eqnarray} \label{Ru}
F_\nu^{-1}\left( F_\mu(x) +\dF(R_u(x))\right) &=& G_\nu^{-1}\left( G_\mu(x) +\dG(R_u(x)) \right) \\
G_\nu(R_d (x) ) - G_\mu(x)  &=& G_\nu(R_u(x)) - G_\mu(R_u(x)). \end{eqnarray}}
\end{remark}

\section{The symmetric marginals case}
\label{sym}

In this section we look at the particular situation where the marginals $\mu,\nu$ satisfy $S(\mu)=\mu$ and $S(\nu)=\nu$. In this case we will say then that the marginals $\mu$ and $\nu$ are symmetric. 
Note that the use of the word `symmetry' in this context comes from the fact that the corresponding volatility smiles at each maturity are symmetric in log-forward moneyness. Symmetric models have been further studied by, e.g., \cite{CarrLee2009put} and \cite{mikeSym}. In particular, in \cite{CarrLee2009put} this concept is called put-call symmetry (PCS). They also give many examples of symmetric models, cf. \cite[Sections 3 and 4]{CarrLee2009put}.

The stochastic volatility models with zero correlation between the volatility and the spot are a classical example of a symmetric model. Consider a situation where $\mu$ and $\nu$ are the marginals at two consecutive times of some stock price process $S$ whose dynamics follows the stochastic volatility model
\begin{eqnarray*}
dS_t &=& S_t \sqrt{V_t} dW^1_t, \; S_0=1 \\
dV_t &=& \alpha(t,V_t) dt + \beta(t,V_t)dW^2_t
\end{eqnarray*}
where $W^1$ and $W^2$ are two independent Brownian motions. Then a simple application of Girsanov's theorem yields $S(\mu) = \mu$ and $S(\nu) = \nu$ (cf. \cite[Proposition 3.1]{rtouzi96}). This includes the Black-Scholes model as a special case.

An additional property satisfied by the symmetric models is given in the following proposition. Recall that $m$ (resp. $\tilde m$) denotes the unique maximizer (resp. minimizer) of $\delta F_{\mu,\nu}$.
\begin{proposition}
Assume that $\mu$ and $\nu$ are symmetric and let Assumption \ref{assSingleMax} hold. Then the unique minimizer $\tilde{m}$
satisfies $\tilde{m}>m$ and it is given by $\tilde{m} = \frac{1}{m}$. As a consequence $m<1$.
\end{proposition}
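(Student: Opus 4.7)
The plan is to exploit the hypothesis that $\mu$ and $\nu$ are symmetric, which forces the identity $\delta F_S = \delta F$ (where $\delta F_S := \delta F_{S(\mu),S(\nu)}$), and then to read off the position of the minimizer of $\delta F$ from the reciprocal correspondence already established in Remark \ref{ass-preserve}.

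First, I would observe that $S(\mu)=\mu$ and $S(\nu)=\nu$ immediately give $F_{S(\mu)} = F_\mu$ and $F_{S(\nu)} = F_\nu$, hence $\delta F_S = \delta F$. Combined with Assumption \ref{assSingleMax}(ii), this tells us that $\delta F_S$ admits the same unique maximizer $m$ as $\delta F$ does. Applying Remark \ref{ass-preserve} with $x^S_\star = m$ then identifies the unique minimizer of $\delta F$ as $\tilde m = 1/m$.

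Next, I would recall that Remark \ref{assumpHK} already noted that, once $\delta F$ admits a single local maximizer, the convex order $\mu \preccurlyeq \nu$ ensures its global minimum lies strictly to the right of its global maximum, so $\tilde m > m$. Plugging in $\tilde m = 1/m$ gives $1/m > m$, and since $m>0$ this is equivalent to $m^2 < 1$, i.e., $m < 1$.

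The whole argument is a direct combination of Remarks \ref{ass-preserve} and \ref{assumpHK} with the elementary identity $\delta F_S = \delta F$, so I do not foresee any real obstacle. The only point that requires a moment's reflection is why the convex order forces $\tilde m > m$, but this is already spelled out in Remark \ref{assumpHK} and need not be reproved here.
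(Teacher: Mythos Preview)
Your proposal is correct and follows essentially the same route as the paper: both exploit the symmetry $\delta F_S = \delta F$ together with the reciprocal correspondence from Remark~\ref{ass-preserve} to get $\tilde m = 1/m$, and both invoke the convex order (as in Remark~\ref{assumpHK}) to conclude $\tilde m > m$ and hence $m<1$. The only cosmetic difference is that you apply Remark~\ref{ass-preserve} in its stated direction (maximizer of $\delta F_S$ $\leftrightarrow$ minimizer of $\delta F$), whereas the paper uses the dual direction (minimizer of $\delta F_S$ $\leftrightarrow$ maximizer of $\delta F$), which is equivalent since $S$ is an involution.
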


\begin{proof}
Let $m$ be the single maximizer of $\delta F_{\mu, \nu}$ and $\tilde{m}$ its minimizer, the existence of which is ensured by the convex order of $\mu$ and $\nu$.
We know from Remark \ref{ass-preserve} that the minimizer $\tilde{m}_S$ of $\delta F_{S(\mu), S(\nu)}$ verifies the relation $m = \frac{1}{\tilde{m}_S}$. Since $\mu$ and $\nu$ are symmetric, then $m = 1/\tilde{m}$.
Since $\mu \preccurlyeq \nu$, we have $m < \tilde{m}$, and consequently $m<1$.
\end{proof}

\begin{example}[The symmetric log-normal case]
\emph{We give an example of symmetric model, where the laws $\mu$ and $\nu$ are log-normal distributions
$$
\mu \sim \ln\mathcal{N}\left(-\frac{\sigma_\mu^2}{2}, \sigma_\mu^2\right), \;\nu \sim \ln\mathcal{N}\left(-\frac{\sigma_\nu^2}{2}, \sigma_\nu^2\right)
\quad \text{with}\; \sigma_\mu<\sigma_\nu.$$
Their probability densities and cumulative distribution functions are given by 
$$p_i(x) = \frac{1}{x\sqrt{2\pi}\sigma_i} \exp\left[ -\frac{(\ln(x)+\frac{1}{2}\sigma_i^2)^2}{2\sigma_i^2}\right],\quad
F_i(x) =  \frac{1}{2} \left[1 + \erf \left(\frac{\ln(x) + \frac{1}{2}\sigma_i^2}{\sqrt{2}\sigma_i}\right)\right] , \quad i=\mu, \nu,$$
where $\erf$ is the error function defined by $\erf(x) = \frac{2}{\sqrt{\pi}} \int_0^x e^{-\frac{t^2}{2}} dt$, $x\in\R$.
In this case, the maximum $m$ and minimum $\tilde{m}$ of $\delta F := F_\nu-F_\mu$ can be computed explicitly. Indeed, they are solutions in $y$ of the equation
$$ \ln (y)^2 = 2\dfrac{\sigma_\mu^2\sigma_\nu^2}{\sigma_\nu^2 - \sigma_\mu^2} \ln \left(\dfrac{\sigma_\nu}{\sigma_\mu}\right) + \dfrac{\sigma_\mu^2\sigma_\nu^2}{4} , $$ which gives
\begin{equation*}
m = \exp \left\{ -\left( 2\dfrac{\sigma_\mu^2\sigma_\nu^2}{\sigma_\nu^2 - \sigma_\mu^2} \ln\left(\dfrac{\sigma_\nu}{\sigma_\mu}\right) + \dfrac{\sigma_\mu^2\sigma_\nu^2}{4}\right)^{1/2}\right\} \quad \text{and}\; \tilde{m} = \dfrac{1}{m}.
\end{equation*}
Note that $m < 1 < \tilde{m}$.
The two figures \ref{FigLeft} and \ref{FigHK} below illustrate the left and right-monotone transference plans $(L_d,L_u)$ and $(R_d,R_u)$, and the basic three-points band decreasing transference plan by \cite{HobsonKlimmek2013}.
Figure \ref{FigLeft} gives the behaviour of the function $\delta F$, showing in particular the location of its maximum $m$ and minimum $\tilde{m}$.}

\begin{figure}[h!]
   \begin{center}
   \includegraphics[scale=0.8]{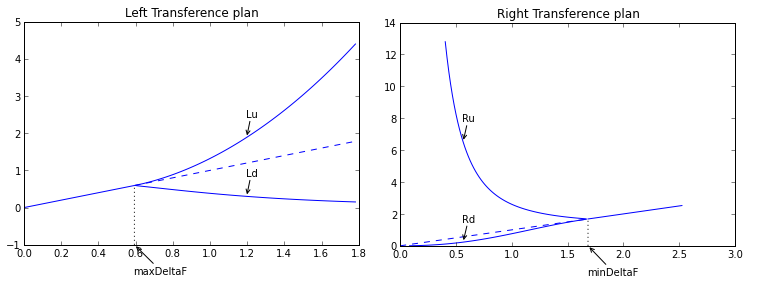} 
   \end{center}
   \caption{\label{FigLeft} Left and right monotone transference plan}
\end{figure}

\begin{figure}[h!]
   \begin{center}
   \includegraphics[scale=0.8]{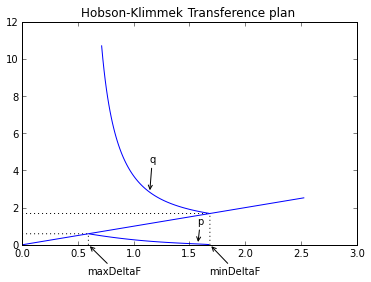} 
   \end{center}
   \caption{\label{FigHK} Hobson-Klimmek transference plan}
\end{figure}
\end{example}

\subsection{Symmetrized payoffs have a lower model risk}
In this subsection we show how the symmetry property of the marginals can be used to reduce the model risk of an option. The quantity $R(\mu, \nu, C) = \overline{P}(\mu, \nu, C) - \underline{P}(\mu, \nu, C)$ is a natural indicator of the model risk associated to a given payoff $C$. Obviously, model-risk free payoffs include payoffs $C$ which can be written as
$C(x,y) = \varphi(x) + \psi(y) +h(x) (y-x)$, since $R(C) = 0$ in this case. The following proposition shows that the converse is also true, under some conditions, even beyond the symmetric marginal case.

In the following proposition we will need some duality theory. We define the dual problems corresponding to $\underline P$ and $\overline P$ as
\[ \underline{D}(\mu, \nu, C) := \sup_{(\varphi, \psi, h)\in \underline{\mathcal{H}}} \mu(\varphi) + \nu(\psi), \quad \overline{D}(\mu, \nu, C) := \inf_{(\varphi, \psi, h)\in \overline{\mathcal{H}}} \mu(\varphi) + \nu(\psi),\]
where $\underline{\mathcal{H}}$ (resp. $\overline{\mathcal{H}}$) denotes the set of all triplets $(\varphi, \psi, h) \in \mathbb{L}^1(\mu) \times \mathbb{L}^1(\nu) \times \mathbb{L}^0$ such that $\varphi(x)+\psi(y)+h(x)(y-x)\leq C(x,y)$ (resp. $\varphi(x)+\psi(y)+h(x)(y-x)\geq C(x,y)$) for all $x,y\in \R_+^*$. Moreover we say that there is \emph{no duality gap} for the lower bound (resp. upper bound) if $\underline P (\mu,\nu,C) = \underline D(\mu,\nu,C)$ (resp. $\overline P(\mu,\nu,C) = \overline D(\mu,\nu,C)$). 

\begin{proposition}
Let $C$ be a payoff such that $R(C)=0$. Assume that the dual problem $\overline{D}$ is attained and that there is no duality gap. Then, there exist functions $\varphi\in \mathbb{L}^1(\mu), \;\psi\in \mathbb{L}^1(\nu),\; h\in \mathbb{L}^0$ such that
\begin{equation*}
C(x,y) = \varphi(x) + \psi(y) + h(x) (y-x), \;Q-a.e. \quad \forall Q \in \mathcal{M}(\mu,\nu).
\end{equation*}
\end{proposition}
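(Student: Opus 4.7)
The argument is a short duality/sandwich calculation. My plan is as follows.

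First, I invoke the two standing hypotheses. Since the dual problem $\overline{D}(\mu,\nu,C)$ is attained, there exists a triplet $(\varphi,\psi,h)\in\overline{\mathcal{H}}$, i.e.\ with $\varphi\in\mathbb{L}^1(\mu)$, $\psi\in\mathbb{L}^1(\nu)$, $h\in\mathbb{L}^0$, and
\[
\varphi(x)+\psi(y)+h(x)(y-x)\ \geq\ C(x,y)\qquad\text{for all }x,y\in\mathbb R_+^*,
\]
such that $\mu(\varphi)+\nu(\psi)=\overline{D}(\mu,\nu,C)$. By the no-duality-gap assumption, the common value equals $\overline{P}(\mu,\nu,C)$.

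Next I use the vanishing of the model risk. The hypothesis $R(\mu,\nu,C)=0$ means $\overline{P}(\mu,\nu,C)=\underline{P}(\mu,\nu,C)$, and since for any $Q\in\mathcal{M}(\mu,\nu)$ one always has $\underline{P}\leq\mathbb{E}^Q[C(X,Y)]\leq\overline{P}$, it follows that $\mathbb{E}^Q[C(X,Y)]$ is the same constant for every $Q\in\mathcal M(\mu,\nu)$, equal to $\mu(\varphi)+\nu(\psi)$. On the other hand, since $X\sim\mu$, $Y\sim\nu$ and $M$ is a $Q$-martingale (so that $\mathbb{E}^Q[h(X)(Y-X)]=0$ by the admissibility of $h$ built into the dual formulation),
\[
\mathbb{E}^Q[\varphi(X)+\psi(Y)+h(X)(Y-X)]\ =\ \mu(\varphi)+\nu(\psi).
\]

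Combining the last two displays gives, for every $Q\in\mathcal{M}(\mu,\nu)$,
\[
\mathbb{E}^Q\bigl[\,\varphi(X)+\psi(Y)+h(X)(Y-X)-C(X,Y)\,\bigr]\ =\ 0.
\]
But the integrand is pointwise nonnegative by the superhedging inequality above, so it must vanish $Q$-a.e. This yields exactly the desired representation
\[
C(x,y)=\varphi(x)+\psi(y)+h(x)(y-x),\qquad Q\text{-a.e., for every }Q\in\mathcal{M}(\mu,\nu),
\]
and the proof is complete.

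There is essentially no hard step once attainment and absence of duality gap are granted; the only subtle point is the martingale integrability needed to ensure $\mathbb{E}^Q[h(X)(Y-X)]=0$, which is standard for triplets in $\overline{\mathcal{H}}$ (one truncates $h$ and uses the pointwise superhedging bound together with $\varphi\in\mathbb L^1(\mu)$, $\psi\in\mathbb L^1(\nu)$ to control the limit), so I would only remark on this rather than write it out.
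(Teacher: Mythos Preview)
Your proof is correct and follows essentially the same duality/sandwich argument as the paper: attain the dual to get a superhedging triplet with $\mu(\varphi)+\nu(\psi)=\overline P$, use $R(C)=0$ to pin $\mathbb E^Q[C(X,Y)]$ at this value for every $Q$, and conclude that the nonnegative integrand has zero expectation, hence vanishes $Q$-a.e. The paper's version actually writes the inequality with the sign reversed (as $C\geq\varphi+\psi+h(y-x)$), which is a typo; your orientation is the correct one for $(\varphi,\psi,h)\in\overline{\mathcal H}$, and your closing remark on the integrability of $h(X)(Y-X)$ is a point the paper leaves implicit.
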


\begin{proof}
Let $C$ be a payoff such that $R(C)=0$ and let there be no duality gap.
The property $R(C)=0$ implies that $\mathbb E^Q[C(X,Y)] = \overline{P}(\mu,\nu,C)$ for all $Q \in \mathcal{M}(\mu,\nu)$. Moreover since the dual problem is attained there exist dual functions $\varphi, \psi, h$ such that
$$\mu(\varphi) + \nu(\psi) = \overline{P}(\mu,\nu,C)$$ and
\begin{equation}
\label{ineq} 
C(x,y) \geq \varphi(x) + \psi(y) + h(x) (y-x), \quad Q - a.e.
\end{equation}
Since all $Q \in \mathcal{M}(\mu,\nu)$ have marginals $\mu$ and $\nu$ as well as the martingale property, we have 
$$\overline{P}(\mu,\nu,C) = \mathbb E^Q[C(X,Y)]= \mathbb E^Q[\varphi(X) + \psi(Y) + h(X) (Y-X)],\; \forall Q \in \mathcal{M}(\mu,\nu).$$
Consequently, we have
\begin{equation*}
\mathbb E^Q\left[C(X,Y) - \varphi(X) - \psi(Y) - h(X) (Y-X)\right] = 0, \; \forall Q \in \mathcal{M}(\mu,\nu).
\end{equation*}
which, combined with (\ref{ineq}), gives $C(x,y) = \varphi(x) + \psi(y) + h(x) (y-x)$, $Q$-a.e. for all $Q \in \mathcal{M}(\mu,\nu)$.
\end{proof}

\begin{remark}\label{duality} \emph{We recall that \cite{BeiglHLPenkner} consider the two-marginals martingale minimization problem $\underline P$ in (\ref{sup}) for upper semi-continuous payoffs $C$ with linear growth, and prove that there is no duality gap under some suitable conditions. Analogous results can be deduced for the primal maximisation problem. In general, the value functions of the corresponding dual problem is not always attained. The very recent paper \cite{beiglbock2015complete} proposes a quasi-sure relaxation of the dual problem, leading to an extension of ``no duality gap'' result to any Borel payoff with the existence of a dual optimizer.}
\end{remark}

Now, let the marginals $\mu$ and $\nu$ be symmetric and let $C$ be any continuous payoff with linear growth. By Proposition \ref{PropSymmetry2}, we have 
$$\overline{P}(\mu, \nu, C) = \overline{P}(\mu, \nu, \S^*(C)), \quad \underline{P}(\mu, \nu, C) = \underline{P}(\mu, \nu, \S^*(C)),$$
implying $ R(\mu, \nu, C) = R(\mu, \nu, \S^*(C))$. In particular, this gives $R(C_\alpha) \leq R(C)$ for payoffs $C_\alpha = \alpha C + (1-\alpha) \S^*(C)$ with $\alpha\in[0,1]$. In financial terms, this means that the new payoff $C_\alpha$ reduces the model risk. Note that $R(C_0) = R(C_1) = R(C)$. Moreover, we have $R(\S^*(C_\alpha)) = R(C_\alpha)$, and since $S$ is an involution, we get $R(C_{1-\alpha}) = R(C_\alpha)$. 

On the other hand, $C_{1/2} = (C +  \S^*(C))/2 = (C_\alpha + C_{1-\alpha})/2$, and because of the symmetry of $R(C_\alpha)$ around $1/2$ we get
\begin{eqnarray*}
R(C_{1/2}) &=& R\left( \dfrac{ C_\alpha + C_{1-\alpha}}{2} \right) \leq \frac{1}{2} R(C_\alpha) + \frac{1}{2} R(C_{1-\alpha})\\
& =& \frac{1}{2} R(C_\alpha) + \frac{1}{2} R(C_{\alpha}) = R(C_{\alpha}).
\end{eqnarray*}
Hence, $\alpha=1/2$ realizes the minimum model risk for the portfolio $C_\alpha$.

\section{Summary}\label{summary}
In this paper we introduce change of numeraire techniques in the two-marginals transport problems for positive martingales. In particular, we study the symmetry properties of \cite{HobsonKlimmek2013} optimal coupling under the change of numeraire, which exchanges type I with type II forward start straddle. As a consequence, we prove that the lower bound prices are attained for both options by the Hobson-Klimmek transference plan. On the other hand, relying on the construction of \cite{BrenierMartingale} of the optimal transference plan introduced by \cite{BeiglJuil}, we also show that the change of numeraire transformation exchanges the left and the right monotone transference plans, so that the latter can be viewed has a mirror coupling acting of the former under a change of numeraire for positive martingales with given marginals. We conclude this paper with some numerical illustrations in the symmetric log-normal marginals case.

\bibliographystyle{plainnat}

\bibliography{biblio}

\begin{thebibliography}{15}
\providecommand{\natexlab}[1]{#1}
\providecommand{\url}[1]{\texttt{#1}}
\expandafter\ifx\csname urlstyle\endcsname\relax
  \providecommand{\doi}[1]{doi: #1}\else
  \providecommand{\doi}{doi: \begingroup \urlstyle{rm}\Url}\fi

\bibitem[Beiglb{\"o}ck and Juillet(2012)]{BeiglJuil}
M.~Beiglb{\"o}ck and N.~Juillet.
\newblock On a problem of optimal transport under marginal martingale
  constraints.
\newblock \emph{Annals of Probability}, 2012.

\bibitem[Beiglb{\"o}ck et~al.(2013)Beiglb{\"o}ck, Henry-Labord{\`e}re, and
  Penkner]{BeiglHLPenkner}
M.~Beiglb{\"o}ck, P.~Henry-Labord{\`e}re, and F.~Penkner.
\newblock Model-independent bounds for option prices---a mass transport
  approach.
\newblock \emph{Finance Stoch.}, 17\penalty0 (3):\penalty0 477--501, 2013.
\newblock \doi{10.1007/s00780-013-0205-8}.

\bibitem[Beiglb{\"o}ck et~al.(2015)Beiglb{\"o}ck, Nutz, and
  Touzi]{beiglbock2015complete}
M.~Beiglb{\"o}ck, M.~Nutz, and N.~Touzi.
\newblock Complete duality for martingale optimal transport on the line.
\newblock \emph{arXiv preprint arXiv:1507.00671}, 2015.

\bibitem[Carr and Lee(2009)]{CarrLee2009put}
P.~Carr and R.~Lee.
\newblock Put-call symmetry: extensions and applications.
\newblock \emph{Math. Finance}, 19\penalty0 (4):\penalty0 523--560, 2009.
\newblock \doi{10.1111/j.1467-9965.2009.00379.x}.

\bibitem[Geman et~al.(1995)Geman, El~Karoui, and Rochet]{geman1995changes}
H.~Geman, N.~El~Karoui, and J.-C. Rochet.
\newblock Changes of numeraire, changes of probability measure and option
  pricing.
\newblock \emph{Journal of Applied probability}, pages 443--458, 1995.

\bibitem[Henry-Labord\`ere and Touzi(2013)]{BrenierMartingale}
P.~Henry-Labord\`ere and N.~Touzi.
\newblock An explicit martingale version of {B}renier's theorem.
\newblock \emph{Finance Stoch.}, 2013.
\newblock Forthcoming.

\bibitem[Hobson and Klimmek(2015)]{HobsonKlimmek2013}
D.~Hobson and M.~Klimmek.
\newblock Robust price bounds for the forward starting straddle.
\newblock \emph{Finance Stoch.}, 19\penalty0 (1):\penalty0 189--214, 2015.
\newblock \doi{10.1007/s00780-014-0249-4}.

\bibitem[Jacquier and Roome(2015)]{Jacquier2012asymptotics}
A.~Jacquier and P.~Roome.
\newblock Asymptotics of forward implied volatility.
\newblock \emph{SIAM Journal on Financial Mathematics}, 6\penalty0
  (1):\penalty0 307--351, 2015.
\newblock \doi{10.1137/140960712}.

\bibitem[Jamshidian(1989)]{jamshidian1989exact}
F.~Jamshidian.
\newblock An exact bond option formula.
\newblock \emph{Journal of finance}, pages 205--209, 1989.

\bibitem[Jeanblanc et~al.(2009)Jeanblanc, Yor, and
  Chesney]{jeanblanc2009mathematical}
M.~Jeanblanc, M.~Yor, and M.~Chesney.
\newblock \emph{Mathematical methods for financial markets}.
\newblock Springer Science \& Business Media, 2009.

\bibitem[Laachir(2015)]{LaachirThesis}
I.~Laachir.
\newblock \emph{Quantification of the model risk in finance and related
  problems}.
\newblock PhD thesis, Ensta-ParisTech, 2015.

\bibitem[Lucic(2003)]{Lucic2004forward}
V.~Lucic.
\newblock Forward-start options in stochastic volatility models.
\newblock \emph{Wilmott Magazine}, 2003.

\bibitem[Renault and Touzi(1996)]{rtouzi96}
E.~Renault and N.~Touzi.
\newblock Option hedging and implied volatilities in a stochastic volatility
  model.
\newblock \emph{Mathematical Finance}, 6\penalty0 (3):\penalty0 279--302, 1996.
\newblock \doi{10.1111/j.1467-9965.1996.tb00117.x}.

\bibitem[Strassen(1965)]{Strassen1965}
V.~Strassen.
\newblock The existence of probability measures with given marginals.
\newblock \emph{Ann. Math. Statist.}, 36:\penalty0 423--439, 1965.

\bibitem[Tehranchi(2009)]{mikeSym}
M.~Tehranchi.
\newblock Symmetric martingales and symmetric smiles.
\newblock \emph{Stochastic Process. Appl.}, 119\penalty0 (10):\penalty0
  3785--3797, 2009.
\newblock \doi{10.1016/j.spa.2009.07.007}.

\end{thebibliography}

\end{document}